\newcommand{\R}{\ensuremath{\mathbb{R}}}
\newcommand{\N}{\ensuremath{\mathbb{N}}} 
\newcommand{\LL}{\ensuremath{\mathcal{L}}}
\newcommand{\delim}[3]{\left#1 #3 \right#2}  
\newcommand{\pin}[1]{\delim{\langle}{\rangle}{#1}} 
\def \ds {\displaystyle}  
\def \tn {\textnormal}  
\theoremstyle{definition} 
\newtheorem{theo}{Theorem}[section] 
\newtheorem{lem}[theo]{Lemma} 
\newtheorem{cor}[theo]{Corollary}
\newtheorem{rem}[theo]{Remark} 
\title[Parabolic problems with localized large diffusion]{Parabolic equations with localized large diffusion: Rate of convergence of attractors}
\author[A. N. Carvalho]{Alexandre N. Carvalho$^\dag$}\thanks{$^\dag$Research partially supported by CNPq  	303929/2015-4, and FAPESP 2018/10997-6, Brazil}
\address[Alexandre N. Carvalho]{Departamento de Matem\'atica,
Instituto de Ci{\^e}ncias Matem{\'a}ticas e de Computa\c{c}{\~a}o,
Universidade de S{\~a}o Paulo Campus de S{\~a}o Carlos,  S{\~a}o Carlos SP, Brazil}
\email{andcarva@icmc.usp.br}
\author[L. Pires]{Leonardo Pires$^*$}
\address[Leonardo Pires]{Departamento de Matem\'atica e Estat\'istica,
Universidade Estadual de Ponta Grossa, Ponta Grossa PR, Brazil}
\email{lpires@uepg.br}
\begin{document}

\begin{abstract}
In this paper we study the asymptotic nonlinear dynamics of scalar semilinear parabolic problems reaction-diffusion type when the diffusion coefficient becomes large in a subregion which is interior to the domain. We obtain, under suitable assumptions, that the family of attractors behaves continuously and we exhibit the rate of convergence. An accurate description of localized large diffusion is necessary.
\end{abstract}

\maketitle

\section{Introduction}

Local spatial homogenization is a feature that appears in several physical phenomenona. It is often present in heat conduction in materials for which the heat may diffuse much more faster in some regions than in others (composite materials). 

Reaction-diffusion models for which the diffusivity varies considerably from one region to another have solutions that tend to become spatially homogeneous in the regions where the diffusivity is large. There has been many studies of mathematical models for which this property was exploited (see, for example, \cite{J.M.Arrieta2000a, Carbone2008} and \cite{Rodriguez-Bernal1998}). 

Also, in \cite{Carvalho1994} the authors considered a scalar parabolic problem where the diffusivity is large except in a neighborhood of a finite number of points where it becomes small (see also, \cite{Fusco1987}). There it was shown that the asymptotic behavior is described by a system of linearly coupled ordinary differential equations. The analysis in \cite{Carvalho1994} requires a detailed description of the transition between large and small diffusion. 

Inspired in these works, we formulate a prototype problem of localized large difusion in order to understand its effects on the continuity of global attractors. Specifying the details on how the diffusivity becomes large or converge we obtain, not only continuity of attractor but also the rate of their convergence.
Our formulation refines the results on continuity of attractors (without rate) obtained in  \cite{Carbone2008} where the  diffusivity becomes large in a subregion which is interior to the domain. We will need to make an accurate description of localized large diffusion as in \cite{Carvalho1994} to be able to establish the rate of convergence.

To better present the main ideas while avoiding excessive notation, we consider the one dimensional scalar case for which the diffusion is large only in a part of the domain, leaving the case where the diffusion is large in a finite number of parts of the domain implicit.  

Consider the scalar parabolic problem
\begin{equation}\label{perturbed_problem}
\begin{cases}
u^\varepsilon_t-(p_\varepsilon(x) u^\varepsilon_x)_x+\lambda u^\varepsilon = f(u^\varepsilon),\quad 0<x<1,\,\,t>0, \\
u^\varepsilon_x(0)=u^\varepsilon_x(1)=0,\quad t>0,\\
u^\varepsilon(0)=u^\varepsilon_0,
\end{cases}
\end{equation}
where $\varepsilon \in (0,\varepsilon_0]$ is a parameter ($0<\varepsilon_0\leq 1$),  $f\in C^2(\R)$ and $\lambda >0$. To describe the coefficients $p_\varepsilon$, let $0=x_0<x_1<x_2<x_3=1$ be a partition of the interval $\Omega=(0,1)$. We assume the diffusion is very large in the open interval $\Omega_0=(x_1,x_2)$ and converges uniformly to $p_0\in C^2(\Omega_1)$ in the $\Omega_1=[0,x_1]\cup [x_2,1]$ as $\varepsilon$ approaches to zero. More precisely, for $m_0>0$ and  $\varepsilon \in (0,\varepsilon_0]$, $p_\varepsilon \in C^2([0,1])$ satisfies the following conditions (see Fig \ref{diffusion}).  

\begin{equation}\label{estimate_diffusion}
\begin{cases}     
p_\varepsilon \overset{\varepsilon\to 0}\longrightarrow p_0,\quad \tn{uniformly in } \Omega_1,\\
p_\varepsilon(x)\geq \dfrac{1}{\varepsilon}\quad \tn{in } [x_1+\varepsilon,x_2-\varepsilon],\\
m_0\leq p_\varepsilon\quad \tn{in } \Omega\quad\tn{and, consequently,}\quad m_0\leq p_0\quad \tn{in }\Omega_1.
\end{cases}
\end{equation}

\begin{figure}[h]
\begin{center}
\includegraphics[scale=0.75]{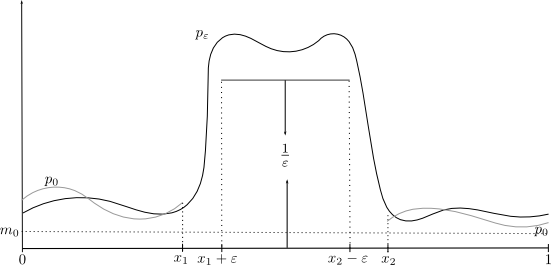} 
\caption{Diffusion}\label{diffusion}
\end{center}
\end{figure}

Due to the large diffusivity it is natural to expect that the solutions $u^\varepsilon$ of the \eqref{perturbed_problem} become approximately spatially constant on $\Omega_0$ as $t$ becomes large. 
Assume that  the  solutions $u^\varepsilon(t,x)$ exist and converge (in some sense), as $\varepsilon\to 0$, to a function $u^0(t,x)$ which is  spatially constant on $(x_1,x_2)$ and denote its value in $\Omega_0$ by $u^0_{\Omega_0}(t)$. With this, the limiting problem of \eqref{perturbed_problem} as $\varepsilon\to 0$ should be given by
\begin{equation}\label{limite_problem}
\begin{cases}
u^0_t-(p_0(x) u^0_x)_x+\lambda u^0 = f(u^0),\quad x\in \Omega_1,\,\,t>0, \\
u^0_x(0)=u^0_x(1)=0,\quad t>0,\\
u^0|_{\Omega_0}=u^0_{\Omega_0},\\
\dot{u}^0_{\Omega_0}+\lambda u^0_{\Omega_0}=f(u^0_{\Omega_0}),
\\
u^0(0)=u^0_0,
\end{cases}
\end{equation}
where $u_0^0=\lim_{\varepsilon\to 0} u^\varepsilon_0$ is constant on $\Omega_0$.

In fact, the problems \eqref{perturbed_problem} and \eqref{limite_problem} are particular cases of those in \cite{Rodriguez-Bernal1998} where it was proved that the solutions of the elliptic operator in \eqref{limite_problem} approximate those of the elliptic operator in \eqref{perturbed_problem} (resolvent convergence). As a consequence of that, the authors in \cite{Rodriguez-Bernal1998}, also proved the convergence of the spectrum of the associated elliptic operators. Nonetheless, nothing is said in \cite{Rodriguez-Bernal1998} about the rate of convergence resolvents or of the spectrum.

To better describe the results we write \eqref{perturbed_problem} and \eqref{limite_problem} abstractly in a natural energy space. To do that we introduce some more terminology. We start defining the operator $A_\varepsilon:\mathcal{D}(A_\varepsilon)\subset L^2(0,1)\to L^2(0,1)$ by
$$
\mathcal{D}(A_\varepsilon)=\{u\in H^2(0,1)\,;\, u_x(0)=u_x(1)=0\}\quad\tn{and}\quad A_\varepsilon u=-(p_\varepsilon u_x)_x+\lambda u.
$$
Next, we define the spaces of functions which are constant in $\Omega_0$ by
$$
L^2_{\Omega_0}(0,1)=\{u\in L^2(0,1)\,;\, u \tn{ is constant a.e. in } \Omega_0\} \ \hbox{ and }
$$
$$
H^1_{\Omega_0}(0,1)=\{u\in H^1(0,1)\,;\, u_x=0 \tn{ in } \Omega_0\}.
$$
Using this terminology, we define the operator $A_0:\mathcal{D}(A_0)\subset L^2_{\Omega_0}(0,1)\to L^2_{\Omega_0}(0,1)$ by
$$
\mathcal{D}(A_0)=\{u\in H^1_{\Omega_0}(0,1)\,;\,-(p_0 u_x)_x\in L^2(\Omega_1),\, u_x(0)=u_x(1)=0\};
$$
$$
A_0 u=\big[-(p_0 u_x)_x+\lambda u\big]\chi_{\Omega_1}+\Big[\lambda u_{\Omega_0}\Big]\chi_{\Omega_0},
$$
where $u_{\Omega_0}$ is the constant value of $u$ in $\Omega_0$.

Hence, $A_\varepsilon$ is a positive self-adjoint operator with compact resolvent for each $\varepsilon\in [0,\varepsilon_0]$, its first eigenvalue is $\lambda$ and we can define, in the usual way (see \cite{Henry1980}), the fractional power space $X_\varepsilon^\frac{1}{2}=H^1(0,1)$, $\varepsilon\in (0,\varepsilon_0]$, and $X_0^\frac{1}{2}=H^1_{\Omega_0}(0,1)$ with the respective scalar products 
$$
\pin{u,v}_{X_\varepsilon^{\frac{1}{2}}}=\int_0^1 p_\varepsilon u_xv_x\,dx+\int_0^1\lambda uv\,dx,\quad u,v\in X_\varepsilon^\frac{1}{2},\,\,\,\varepsilon\in (0,\varepsilon_0];
$$
$$
\pin{u,v}_{X_0^{\frac{1}{2}}}=\int_{\Omega_1} p_0 u_xv_x\,dx+\int_0^1\lambda uv\,dx,\quad u,v\in X_0^\frac{1}{2}.
$$

The space $X_0^\frac{1}{2}$ is a closed subspace of $X_\varepsilon^\frac{1}{2}$, $\varepsilon\in (0,\varepsilon_0]$ and $X_\varepsilon^\frac{1}{2}\subset H^1(0,1)$ with embedding constant independent of $\varepsilon$. However, a delicate issue here is that, the embedding $H^1(0,1)\subset X_\varepsilon^\frac{1}{2}$ is not independent of $\varepsilon$, in fact
$$
\|u\|_{H^1}\leq C\|u\|_{X_\varepsilon^\frac{1}{2}}\leq M_\varepsilon \|u\|_{H^1},\quad u\in H^1,
$$
where $M_\varepsilon\to \infty$ as $\varepsilon\to 0$.  Actually, it can be proved that it is not possible to choose $M_\varepsilon$ independent of $\epsilon$ (see \cite{CarvalhoPires2017}), therefore estimates in $H^1$ norm do not provide (uniform in $\epsilon$) estimates in $X_\varepsilon^\frac{1}{2}$ norm. 

We will consider $X_\varepsilon^\frac{1}{2}$ as the phase space for the problems \eqref{perturbed_problem} and \eqref{limite_problem} that is, if we also denote by $f$ the Nemytski\u{\i} operator associated to the function $f$, then \eqref{perturbed_problem} and \eqref{limite_problem} can be written in the following coupled form  
\begin{equation}\label{semilinear_problem}
\begin{cases}
u^\varepsilon_t+A_\varepsilon u^\varepsilon = f(u^\varepsilon), \\
u^\varepsilon(0)=u_0^\varepsilon \in X_\varepsilon^{\frac{1}{2}},\quad \varepsilon\in [0,\varepsilon_0].
\end{cases}
\end{equation}  
Since $X_\varepsilon^\frac{1}{2}\subset C([0,1])$, we do not require any growth condition for $f$ and we only assume that it satisfies the dissipativeness condition
$$
\limsup_{|x|\to\infty} \dfrac{f(x)}{x} <\lambda.
$$

It follows from \cite{J.M.Arrieta1999,J.M.Arrieta2000} and \cite{J.M.Arrieta2000a} that \eqref{semilinear_problem}, for each $\varepsilon\in [0,\varepsilon_0]$, is globally well posed and its solutions are classical and continuously differentiable with respect to the initial data. Also, we may assume, without loss of generality, that $f$ is globally bounded with globally bounded derivatives up to second order. Thus we are able to consider in $X_\varepsilon^{\frac{1}{2}}$ the family of nonlinear semigroups $\{T_\varepsilon(\cdot)\}_{\varepsilon\in [0,\varepsilon_0]}$ defined by $T_\varepsilon(t)=u^\varepsilon(t,u^\varepsilon_0)$, $t\geq 0$, where $u^\varepsilon(t,u^\varepsilon_0)$ is the solution of \eqref{semilinear_problem} through $u^\varepsilon_0\in X_\varepsilon^\frac{1}{2}$ and 
\begin{equation}\label{semigroupononlinear}
T_\varepsilon(t)u_0^\varepsilon   = e^{-A_{\varepsilon}t}u_0^\varepsilon+\int_0^{t} e^{-A_\varepsilon (t-s)}f(T_\varepsilon(s))\,ds,\quad t\geq 0, 
\end{equation}
has a global attractor $\mathcal{A}_\varepsilon$, for each $\varepsilon\in [0,\varepsilon_0]$ such that $\overline{\bigcup_{\varepsilon\in [0,\varepsilon_0]}\mathcal{A}_\varepsilon}$ is compact in $H^1(0,1)$.  

We recall that the family $\{\mathcal{A}_\varepsilon\}_{\varepsilon\in[0,\varepsilon_0]}$ of global attractors is continuous at $\varepsilon=0$ if  
$$
\tn{d}_H(\mathcal{A}_\varepsilon,\mathcal{A}_0)=\tn{dist}_H(\mathcal{A}_\varepsilon,\mathcal{A}_0)+\tn{dist}_H(\mathcal{A}_0,\mathcal{A}_\varepsilon)\to 0\quad\tn{as}\quad \varepsilon\to 0,
$$
where  
$$
\tn{dist}_H(A,B)=\sup_{a\in A}\inf_{b\in B}\|a-b\|_{X_\varepsilon^\frac{1}{2}},\quad A,B\subset X_\varepsilon^\frac{1}{2}.
$$

We also recall that the equilibria solutions of \eqref{semilinear_problem} are those which are independent of time, that is, for $\varepsilon \in [0,\varepsilon_0]$, they are the solutions of the elliptic problem  $ A_\varepsilon u^\varepsilon-f(u^\varepsilon)=0$. We denote by $\mathcal{E}_\varepsilon$ the set of the equilibria solutions of $A_\varepsilon$  and we say that $u^\varepsilon_* \in \mathcal{E}_\varepsilon$ is a hyperbolic if
$\sigma(A_\varepsilon-f'(u_*^\varepsilon))\cap \{\mu\in\mathbb{C}\,;\, Re(\mu)=0\}=\varnothing$.

Since hyperbolicity of equilibria is a quite common property for reaction diffusion equations (see \cite{A.N.Carvalho2010}, for example), we assume  $\mathcal{E}_0$ is composed of hyperbolic equilibria only, therefore $\mathcal{E}_0$ is finite and the family $\{\mathcal{E}_\varepsilon\}_{\varepsilon\in [0,\varepsilon_0]}$ is continuous at $\varepsilon= 0$ (see \cite{Carbone2008}); in fact, for $\varepsilon$ sufficiently small, $\mathcal{E}_\varepsilon$ is composed of a finite number of hyperbolic solutions and the semigroups in \eqref{semigroupononlinear} are gradient. Moreover in \cite{Carbone2006} the authors proved that the semigroup $T_0(\cdot)$ is Morse-Smale from which it follows the stability of its phase diagram under suitable perturbations (see \cite{Bortolan}). In \cite{Carbone2006} the authors also proved the gap condition for eigenvalues of the operators $A_\varepsilon$, $\varepsilon\in [0,\varepsilon_0]$, and then the existence of exponential attracting finite dimensional inertial manifolds $\mathcal{M}_\varepsilon$ (of dimension independent of $\epsilon$) containing $\mathcal{A}_\varepsilon$ is ensured. Thus we can restrict the semigroups $T_\varepsilon$ to these inertial manifolds in order to obtain a finite dimensional problem. The robustness these smooth inertial manifolds under regular perturbations is used to ensure the the phase diagram commutativity between attractors. 

Under these assumptions, the authors in \cite{J.M.Arrieta2000a} and \cite{Carbone2008} proved the continuity of attractors (as sets) for a problem similar to  \eqref{semilinear_problem} in the phase space $X_\varepsilon^\frac{1}{2}$, however rate of convergence of attractors was not considered. The aim of this paper is to consider the rate of convergence of attractors as $\varepsilon$ tends to zero.

Some results about rate of convergence of attractors for Morse-Smale problems are founded in \cite{Santamaria2017}, where the authors have obtained an almost optimal rate of convergence of attractors involving the compact convergence of the resolvent operators  
$ \|A_\varepsilon^{-1}-A_0^{-1}\|_{\LL(L^2_{\Omega_0},X_\varepsilon^\frac{1}{2})}$ for a specific thin domain problems. 

Inspired by the works described above, we exhibit a rate of convergence of the attractor of \eqref{semilinear_problem}, in the phase space $X_\varepsilon^\frac{1}{2}$, to the attractor of \eqref{limite_problem}, in the pase space  $X_0^\frac{1}{2}$, in terms of the $\epsilon$ dependence of the diffusion coefficients $p_\varepsilon$. 
All asymptotic objects in our prototype problem (equilibria, unstable manifolds, invariant manifolds and attractors)
converge at this rate which is the rate of convergence of resolvent operators. 

Therefore our main contribution in this paper is to establish the rate of convergence of resolvents operators, eigenvalues and equilibria for the problem \eqref{semilinear_problem} to be able to apply the results of \cite{Santamaria2017} to the continuity of attractors.  In addition, for the particular model considered, our work improves the works \cite{Arrieta,Carbone2008} and \cite{Carbone2006} (in what concerns the continuity of attractors),  where continuity of attractors was proved without any rate. The current paper is the first to consider the rate of convergence of attractors for parabolic problems with localized large diffusion.

This paper is organized as follows. In the Section \ref{Elliptic Problem} we make the study of the elliptic problem in order to find a rate of attraction for the resolvent operators. In the Section \ref{Rate of Convergence of Eigenvalues and Equilibria}  we exhibit a rate of attraction for the eigenvalues and equilibrium points. In Section \ref{Rate of Convergence of Invariant Manifolds} we obtain the rate of convergence of invariant manifolds and in the Section \ref{Rate of Convergence of Attractors} we reduce the system to finite dimensions and we finally obtain a rate of convergence of attractors.   

\bigskip

\noindent{\bf Acknowledgement:} The authors wish to express their gratitude to an anonymous referee who carefully read in detail the whole paper and made suggestions that considerably improved its writing and readability.

\section{Elliptic Problem}\label{Elliptic Problem}
In this section we analyze the solvability of the elliptic problem associated to \eqref{semilinear_problem} in order to obtain the rate of convergence of the resolvent operators. As a consequence we will estimate the convergence of $T_\varepsilon(1)$; that is, the solution operator associated to \eqref{semigroupononlinear} at time one. Later we will transfer such estimate to the convergence of $T_\varepsilon(1)$ restricted to finite dimensional invariant manifold.

Next we prove the convergence of the resolvent operator $A_\varepsilon^{-1}|_{L^2_{\Omega_0}}$ to $A_0^{-1}$ establishing that the rate of this convergence of resolvent operators is $(\|p_\varepsilon-p_0\|_{L^\infty(\Omega_1)}+\varepsilon)^\frac{1}{2}$.  

\begin{lem}\label{Rate_of_convergence}
For $g\in L^2_{\Omega_0}$ with $\|g\|_{L^2}\leq 1$ and $\varepsilon\in [0,\varepsilon_0]$, let $u^\varepsilon$ be the solution of elliptic problem  
\begin{equation}
\begin{cases}
A_\varepsilon u^\varepsilon=g, \quad x\in (0,1),\\
u^\varepsilon_x(0)=u^\varepsilon_x(1)=0.
\end{cases}
\end{equation}
Then there is a constant $C>0$ independent of $\varepsilon$ such that
\begin{equation}
\|u^\varepsilon-u^0\|_{X_\varepsilon^\frac{1}{2}}\leq C(\|p_\varepsilon-p_0\|_{L^\infty(\Omega_1)}+\varepsilon)^\frac{1}{2}.
\end{equation} 
\end{lem}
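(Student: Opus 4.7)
The plan is a variational energy argument exploiting the singular lower bound $p_\varepsilon\ge 1/\varepsilon$ on $[x_1+\varepsilon,x_2-\varepsilon]$ to control the oscillation of $u^\varepsilon$ on $\Omega_0$, and the uniform convergence $p_\varepsilon\to p_0$ on $\Omega_1$ for the comparison on the slow region. First I would obtain uniform a priori bounds: testing the weak form of the equation against $u^\varepsilon$ itself and using coercivity $m_0\le p_\varepsilon$, $m_0\le\lambda+c$ gives $\|u^\varepsilon\|_{X_\varepsilon^{1/2}}\le C$ and $\|u^0\|_{X_0^{1/2}}\le C$, both independent of $\varepsilon$.

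Next I would establish the crucial oscillation estimate on $\Omega_0$. Since $p_\varepsilon\ge 1/\varepsilon$ on $[x_1+\varepsilon,x_2-\varepsilon]$, the energy bound gives $\int_{x_1+\varepsilon}^{x_2-\varepsilon}|u^\varepsilon_x|^2\le C\varepsilon$, and Cauchy--Schwarz in the form
\[
|u^\varepsilon(y)-u^\varepsilon(x)|^2\le\Bigl(\int_x^y\frac{1}{p_\varepsilon}\Bigr)\Bigl(\int_x^y p_\varepsilon|u^\varepsilon_x|^2\Bigr),
\]
together with $\int_{\Omega_0}1/p_\varepsilon\le C\varepsilon$ (split $\Omega_0$ into the interior where $1/p_\varepsilon\le\varepsilon$, and the two transition layers of total length $2\varepsilon$ where $1/p_\varepsilon\le 1/m_0$), upgrades this to $|u^\varepsilon(y)-u^\varepsilon(x)|\le C\sqrt{\varepsilon}$ uniformly on $\overline{\Omega_0}$. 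Since $u^0\in X_0^{1/2}\hookrightarrow C([0,1])$ is constant on $\Omega_0$, continuity forces $u^0(x_1)=u^0(x_2)=u^0_{\Omega_0}$, so $u^\varepsilon$ is already $O(\sqrt{\varepsilon})$-close on $\Omega_0$ to the constant $u^0_{\Omega_0}$ modulo the boundary contributions.

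Third, I would construct an approximation $\hat u^\varepsilon\in X_0^{1/2}$ (constant on $\Omega_0$) with $\|u^\varepsilon-\hat u^\varepsilon\|_{X_\varepsilon^{1/2}}\le C\sqrt{\varepsilon}$, and then compare $\hat u^\varepsilon$ with $u^0$. Subtracting the weak forms for $u^\varepsilon$ and $u^0$ and testing against $\hat u^\varepsilon-u^0\in X_0^{1/2}$, coercivity of the limit bilinear form bounds the left-hand side, while the right-hand side splits into a coefficient-difference term $\int_{\Omega_1}(p_\varepsilon-p_0)(u^0)_x(\hat u^\varepsilon-u^0)_x$, bounded by $\|p_\varepsilon-p_0\|_{L^\infty(\Omega_1)}\|u^0\|_{X_0^{1/2}}\|\hat u^\varepsilon-u^0\|_{X_0^{1/2}}$, plus a remainder of size $O(\sqrt{\varepsilon})\|\hat u^\varepsilon-u^0\|_{X_0^{1/2}}$ inherited from the passage $u^\varepsilon\mapsto\hat u^\varepsilon$. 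Young's inequality absorbs the linear factors, yielding $\|\hat u^\varepsilon-u^0\|_{X_0^{1/2}}^2\le C(\|p_\varepsilon-p_0\|_{L^\infty(\Omega_1)}+\varepsilon)$, and the triangle inequality together with the uniform injection $X_0^{1/2}\subset X_\varepsilon^{1/2}$ concludes.

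The main obstacle I expect is the construction of $\hat u^\varepsilon$: forcing a constant value on $\Omega_0$ produces a discontinuity at $x_1,x_2$ that must be smoothed in a way compatible with the weighted norm, whose gradient weight $p_\varepsilon$ may reach $1/\varepsilon$. Naive linear interpolation across a transition layer already inflates the weighted Dirichlet energy beyond $\sqrt{\varepsilon}$. The correct object is a $1/p_\varepsilon$-weighted interpolant (essentially the $p_\varepsilon$-harmonic extension matching the boundary values $u^\varepsilon(x_1),u^\varepsilon(x_2)$, whose Dirichlet energy equals $(u^\varepsilon(x_2)-u^\varepsilon(x_1))^2/\int_{\Omega_0}1/p_\varepsilon$), combined with a subsequent collapse to a single constant using the oscillation bound; balancing this against the $O(\sqrt{\varepsilon})$ jump and the explicit two-sided estimates of $\int_{\Omega_0}1/p_\varepsilon$ is the delicate point on which the square-root rate hinges.
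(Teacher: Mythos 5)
Your first two steps (uniform energy bounds and the oscillation estimate $\mathrm{osc}_{\Omega_0}u^\varepsilon\le C\sqrt{\varepsilon}$ via $\int_{\Omega_0}1/p_\varepsilon\le C\varepsilon$) are correct and match what the paper uses. The gap is in your third step, which is the heart of your argument: the existence of $\hat u^\varepsilon\in X_0^{1/2}$ with $\|u^\varepsilon-\hat u^\varepsilon\|_{X_\varepsilon^{1/2}}\le C\sqrt{\varepsilon}$ is asserted but not established, and your sketch of the construction does not deliver it. Two concrete problems. First, since $\hat u^\varepsilon$ must be constant on $\Omega_0$, necessarily $\|u^\varepsilon-\hat u^\varepsilon\|_{X_\varepsilon^{1/2}}^2\ge\int_{\Omega_0}p_\varepsilon|u^\varepsilon_x|^2$; from the bounds you have stated (the energy bound and the unweighted estimate $\int_{x_1+\varepsilon}^{x_2-\varepsilon}|u^\varepsilon_x|^2\le C\varepsilon$) this weighted integral is only $O(1)$, because $p_\varepsilon$ has no upper bound on $\Omega_0$. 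What rescues it is the pointwise flux bound $|p_\varepsilon(x)u^\varepsilon_x(x)|\le\|g\|_{L^1}+\|(\lambda+c)u^\varepsilon\|_{L^1}\le C$, obtained by integrating the equation from the Neumann endpoint, which gives $p_\varepsilon|u^\varepsilon_x|^2\le C^2/p_\varepsilon$ and hence $\int_{\Omega_0}p_\varepsilon|u^\varepsilon_x|^2\le C\varepsilon$; the paper records exactly this bound ($\|u^\varepsilon_x\|_{L^\infty}\le m_0^{-1}(1+\lambda+\max c)$), and you never derive it. Second, the endpoint matching: the constant value of $\hat u^\varepsilon$ on $\Omega_0$ can agree with $u^\varepsilon$ at only one of $x_1,x_2$, leaving an $O(\sqrt{\varepsilon})$ mismatch at the other; smoothing that mismatch over a layer of width $\varepsilon$ costs weighted Dirichlet energy of order $(\sqrt{\varepsilon})^2/\varepsilon=O(1)$, which destroys the estimate. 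The repair is to place the corrector in $\Omega_1$ over a layer of width $O(1)$, where $p_\varepsilon$ is uniformly bounded, so the cost is $O(\varepsilon)$. Your proposed alternative, a $p_\varepsilon$-harmonic extension inside $\Omega_0$, is not constant on $\Omega_0$ and hence not in $X_0^{1/2}$, and the subsequent "collapse to a constant" is precisely the unresolved step; you flag it yourself as the delicate point.

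It is worth seeing how the paper sidesteps this entirely. It never approximates $u^\varepsilon$ in the $X_\varepsilon^{1/2}$ norm by an element of $X_0^{1/2}$. Instead it runs an asymmetric test-function argument: with $v=u^\varepsilon-u^0$, it tests the $\varepsilon$-problem with $v$ and the limit problem with $Ev$, where $E$ averages over $\Omega_0$ and interpolates linearly on the thin layers $[x_1-\varepsilon,x_1]\cup[x_2,x_2+\varepsilon]\subset\Omega_1$. After subtraction, the derivative of the interpolant appears only in the term $\int_{\Omega_\varepsilon}p_0|u^0_x||(Ev)_x|$, i.e.\ paired with the bounded quantity $p_0|u^0_x|$, so only the $L^1$ bound $\int_{\Omega_\varepsilon}|(Ev)_x|\le C\|v\|_{X_\varepsilon^{1/2}}\varepsilon^{1/2}$ is needed, not a small weighted $L^2$ bound. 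That is exactly why the paper's interpolation layer can have width $\varepsilon$ while yours cannot. Your route can be completed (with the flux bound and $O(1)$-width correctors), but as written the central estimate $\|u^\varepsilon-\hat u^\varepsilon\|_{X_\varepsilon^{1/2}}\le C\sqrt{\varepsilon}$ does not follow from what you have proved.
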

\begin{proof}
First note that the weak solution $u^\varepsilon$ satisfies
\begin{equation}\label{weak_solution}
\int_0^1 p_\varepsilon u^\varepsilon_x \varphi_x\,dx+\int_0^1 \lambda  u^\varepsilon \varphi\,dx=\int_0^1 g\varphi\,dx,\quad\forall\,\varphi\in X_\varepsilon^{\frac{1}{2}},\,\,\varepsilon\in(0,\varepsilon_0];
\end{equation}
\begin{equation}\label{weak_solution2}
\int_{\Omega_1} p_0 u^0_x \varphi_x\,dx+\int_0^1 \lambda u^0 \varphi\,dx=\int_0^1 g\varphi\,dx,\quad\forall\,\varphi\in X_0^{\frac{1}{2}}.
\end{equation}
If we take $\varphi =u^\varepsilon$ as a test function we get uniform bound for weak solution $u^\varepsilon$ in the spaces $H^1$ and $X_\varepsilon^\frac{1}{2}$ for $\varepsilon\in [0,\varepsilon_0]$. Also the embedding $H^1\subset L^\infty$  gives us an uniform bound for $u^\varepsilon$ in the space $L^\infty$. For estimate $u^\varepsilon_x$ note that
$
-(p_\varepsilon u^\varepsilon_x)_x=g-\lambda u^\varepsilon
$
integrating from $0$ to $x$ we obtain, $p_\varepsilon |u^\varepsilon_x|\leq \|g\|_{L^2}+\lambda\|u^\varepsilon\|_{L^2}$, $x\in [0,1]$ and using \eqref{estimate_diffusion} we have $m_0|u^\varepsilon_x|\leq 1+\lambda$, $x\in [0,1]$, Thus $\|u^\varepsilon_x\|_{L^\infty}\leq m_0^{-1}(1+\lambda)$

In what follows $C$ denotes a positive constant independent of $\varepsilon$ (it may vary from one place no another). 

We define the linear operator $E:X_\varepsilon^\frac{1}{2}\to X_0^\frac{1}{2}$ by 
$$
Eu=\begin{cases} u \quad\tn{in}\quad \Omega_{1\varepsilon}=[0,x_1-\varepsilon]\cup[x_2+\varepsilon,1],  \\
\tn{linear} \quad\tn{in}\quad  \Omega_\varepsilon=[x_1-\varepsilon,x_1]\cup[x_2,x_2+\varepsilon],\\
\bar{u}:=\ds\frac{1}{x_2-x_1}\int_{x_1}^{x_2} u\,dx \quad\tn{in}\quad \Omega_0 , 
\end{cases}
$$
for all $u\in X_\varepsilon^\frac{1}{2}$. If we let $u^\varepsilon-u^0$ as a test function in \eqref{weak_solution} and if we let $E(u^\varepsilon-u^0)$ as a test function in \eqref{weak_solution2}, we have 

\begin{align*}
\int_0^1\! p_\epsilon (u^\varepsilon_x \! -\!u^0_x)^2 dx \!+\!\lambda\! \int_0^1 \!\!(u^\varepsilon\!-\!u^0)^2 dx &\!=-\!\!\int_{\Omega_{1}}\!\!(p_\varepsilon\!-\!p_0)u^0_x (u^\varepsilon_x\!-\!u^0_x) \,dx +\!\! \int_{x_1-\epsilon}^{x_2+\epsilon}\!\!\!g\,(I\!-\!E) (u^\varepsilon\!-\!u^0)\,dx\!\\
&-\!\!\int_{\Omega_\varepsilon}\!\!p_0u^0_x(I\!-\!E)(u^\varepsilon\!-\!u^0)_x \,dx\!-\!\lambda\! \int_{x_1-\varepsilon}^{x_2+\varepsilon}\!\!\! u^0 (I\!-\!E)(u^\varepsilon\!-\!u^0)  \,dx.
\end{align*}


Since $p_\varepsilon$ converges uniformly to $p_0$ in $\Omega_1$, we have $p_\varepsilon$ is uniformly bounded in $\Omega_1$. Thus by H\"{o}der inequality, \eqref{estimate_diffusion} and the uniform bound for weak solution $u^\varepsilon$ and $u^\varepsilon_x$, first term in the right hand side of the above expression can be estimated by $C\|u^\varepsilon-u^0\|_{X_\varepsilon^\frac{1}{2}}(\|p_\varepsilon-p_0\|_{L^\infty(\Omega_1)}+\varepsilon^\frac{1}{2})$. For terms that involve the operator $E$ we have used 
$$
\int_{\Omega_\varepsilon} |E(u^\varepsilon-u^0)_x|\,dx\leq C\|u^\varepsilon-u^0\|_{X_\varepsilon^\frac{1}{2}}\varepsilon^\frac{1}{2} \tn{ and } \|E(u^\varepsilon-u^0)\|_{L^\infty(\Omega_\varepsilon)}\leq C\|u^\varepsilon-u^0\|_{X_\varepsilon^\frac{1}{2}}.
$$
We will prove the first of these inequalities. Denote $v^\varepsilon=u^\varepsilon-u^0$ and assume that $x\in [x_1-\varepsilon,x_1]$. In this case we have
$$
|E(v^\varepsilon)_x|=\Big|\frac{\bar{v^\varepsilon}-v^\varepsilon(x_1-\varepsilon)}{\varepsilon}\Big|\leq \Big|\frac{\bar{v^\varepsilon}-v^\varepsilon(x_1+\varepsilon)}{\varepsilon}\Big|+\Big|\frac{v^\varepsilon(x_1+\varepsilon)-v^\varepsilon(x_1-\varepsilon)}{\varepsilon}\Big|.
$$
Consequently
\begin{align*}
|\bar{v^\varepsilon}-v^\varepsilon(x_1+\varepsilon)|&\leq \frac{1}{x_2-x_1}\int_{x_1}^{x_2}|v^\varepsilon-v^\varepsilon(x_1+\varepsilon)|\,dx \\
& = \frac{1}{x_2-x_1}\Big[\int_{x_1}^{x_1+\varepsilon}+\int_{x_1+\varepsilon}^{x_2-\varepsilon}+\int_{x_2-\varepsilon}^{x_2}|v^\varepsilon-v^\varepsilon(x_1+\varepsilon)|\,dx\Big]
\end{align*}
and we have that
$$
\int_{x_1}^{x_1+\varepsilon}+\int_{x_2-\varepsilon}^{x_2}|v^\varepsilon-v^\varepsilon(x_1+\varepsilon)|\,dx\leq C\|v^\varepsilon\|_{L^\infty}\varepsilon\leq C\|v^\varepsilon\|_{X_\varepsilon^\frac{1}{2}}\varepsilon^\frac{1}{2}.
$$
Now, for $x\in [x_1+\varepsilon,x_2-\varepsilon]$,
$$
|v^\varepsilon(x)-v^\varepsilon(x_1+\varepsilon)|\leq \int_{x_1+\varepsilon}^{x_2-\varepsilon}|v^\varepsilon_x|\,dx\leq \Big(\int_{x_1+\varepsilon}^{x_2-\varepsilon}|v^\varepsilon_x|^2\,dx \Big)^\frac{1}{2},
$$
but
$$
\frac{1}{\varepsilon}\int_{x_1+\varepsilon}^{x_2-\varepsilon}|v^\varepsilon_x|^2\,dx\leq \int_{x_1+\varepsilon}^{x_2-\varepsilon}p_\varepsilon|v^\varepsilon_x|^2\,dx\leq \|v^\varepsilon\|_{X_\varepsilon^\frac{1}{2}}^2
$$
and then 
$$
\int_{x_1+\varepsilon}^{x_2-\varepsilon} |v^\varepsilon(x)-v^\varepsilon(x_1+\varepsilon)|\,dx \leq C \|v^\varepsilon\|_{X_\varepsilon^\frac{1}{2}} \varepsilon^\frac{1}{2}.
$$
We also have
$$
|v^\varepsilon(x_1+\varepsilon)-v^\varepsilon(x_1-\varepsilon)|\leq \int_{x_1-\varepsilon}^{x_1+\varepsilon}|v^\varepsilon_x|\,dx\leq \Big(\int_{x_1-\varepsilon}^{x_1+\varepsilon}|v^\varepsilon_x|^2\,dx \Big)^\frac{1}{2}(2\varepsilon)^\frac{1}{2}\leq C \|v^\varepsilon\|_{X_\varepsilon^\frac{1}{2}} \varepsilon^\frac{1}{2}.
$$
All other estimates are similar and the proof is complete.
\end{proof}

As a consequence of the previous lemma, we have the following result.
\begin{cor}\label{resolvent_estimate}
There is a positive constant $C$ independent of $\varepsilon$ such that
\begin{equation}
\|A_\varepsilon^{-1}-A_0^{-1}\|_{\LL(L^2_{\Omega_0},X_\varepsilon^\frac{1}{2})}\leq C(\|p_\varepsilon-p_0\|_{L^\infty(\Omega_1)}+\varepsilon)^\frac{1}{2}.
\end{equation} 
Furthermore, there is $\varphi\in (\frac{\pi}{2},\pi)$ such that for all $\mu \in\Sigma_{\lambda,\varphi}=\{\mu\in\mathbb{C}:|\tn{arg}(\mu+\lambda)|\leq \varphi \}\setminus\{\mu\in\mathbb{C}:|\mu+\lambda|\leq  r\}$, for small $r>0$, 
\begin{equation}
\|(\mu+A_\varepsilon)^{-1}-(\mu+A_0)^{-1}\|_{\LL(L^2_{\Omega_0},X_\varepsilon^{\frac{1}{2}})}\leq C(\|p_\varepsilon-p_0\|_{L^\infty(\Omega_1)}+\varepsilon)^\frac{1}{2}.
\end{equation}
\end{cor}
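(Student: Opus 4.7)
The plan is to treat the two estimates separately, with the second reducing to the first via a resolvent identity.

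The first inequality is essentially a reformulation of Lemma \ref{Rate_of_convergence}. For $g\in L^2_{\Omega_0}$ with $\|g\|_{L^2}\leq 1$, one has $A_\varepsilon^{-1}g=u^\varepsilon$ and $A_0^{-1}g=u^0$, where $u^\varepsilon$ and $u^0$ are precisely the weak solutions analyzed in Lemma \ref{Rate_of_convergence}. Taking the supremum over such $g$ in the definition of the operator norm yields the claim immediately.

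For the second inequality, the plan is to reduce the problem at general $\mu$ to the case $\mu=0$ via the second resolvent identity. Writing $R_\varepsilon(\mu)=(\mu+A_\varepsilon)^{-1}$, the identities $R_\varepsilon(\mu)=R_\varepsilon(0)-\mu R_\varepsilon(\mu)R_\varepsilon(0)$ and its analogue for $R_0$ yield, after a short algebraic manipulation using that $I-\mu R_\varepsilon(\mu)=A_\varepsilon R_\varepsilon(\mu)$ and $(I+\mu R_0(0))^{-1}=A_0 R_0(\mu)$, the factorization
$$
R_\varepsilon(\mu)-R_0(\mu)=A_\varepsilon R_\varepsilon(\mu)\,\bigl[A_\varepsilon^{-1}-A_0^{-1}\bigr]\,A_0 R_0(\mu).
$$
It then suffices to bound each factor separately. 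The middle factor is controlled by $C(\|p_\varepsilon-p_0\|_{L^\infty(\Omega_1)}+\varepsilon)^{1/2}$ from the first part as an operator $L^2_{\Omega_0}\to X_\varepsilon^{1/2}$. The self-adjointness of $A_\varepsilon$ and $A_0$, together with their $\varepsilon$-uniform spectral lower bound $\sigma(A_\varepsilon)\subset[m_0,\infty)$ and the fact that $\Sigma_{\bar\lambda,\phi}$ is bounded away from $-\sigma(A_\varepsilon)$ uniformly in $\varepsilon$ (thanks to $\phi<\pi$ and the excluded disk of radius $r$), give via the spectral theorem uniform bounds $\|A_\varepsilon R_\varepsilon(\mu)\|_{\mathcal{L}(L^2)}\leq C$ and $\|A_0 R_0(\mu)\|_{\mathcal{L}(L^2_{\Omega_0})}\leq C$ for $\mu\in\Sigma_{\bar\lambda,\phi}$.

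The final point is to upgrade the $L^2$ bound on $A_\varepsilon R_\varepsilon(\mu)$ to a bound on $X_\varepsilon^{1/2}$. Since $X_\varepsilon^{1/2}=\mathcal{D}(A_\varepsilon^{1/2})$ with $\|u\|_{X_\varepsilon^{1/2}}=\|A_\varepsilon^{1/2}u\|_{L^2}$, and $A_\varepsilon R_\varepsilon(\mu)$ commutes with $A_\varepsilon^{1/2}$ by the functional calculus, one has $\|A_\varepsilon R_\varepsilon(\mu)u\|_{X_\varepsilon^{1/2}}=\|A_\varepsilon R_\varepsilon(\mu)A_\varepsilon^{1/2}u\|_{L^2}\leq C\|u\|_{X_\varepsilon^{1/2}}$, giving $\|A_\varepsilon R_\varepsilon(\mu)\|_{\mathcal{L}(X_\varepsilon^{1/2})}\leq C$. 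Composing the three factors in the factorization then gives the desired rate. The main obstacle throughout is ensuring uniformity of all constants in both $\varepsilon$ and $\mu$; this is dictated by the choice of sector (uniform distance from $-\sigma(A_\varepsilon)$) and the uniform coercivity constant $m_0$ of the operators $A_\varepsilon$.
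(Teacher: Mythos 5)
Your proposal is correct and follows essentially the same route as the paper: the first estimate is read off from Lemma \ref{Rate_of_convergence}, and the second uses exactly the factorization $(\mu+A_\varepsilon)^{-1}-(\mu+A_0)^{-1}=A_\varepsilon(\mu+A_\varepsilon)^{-1}\,(A_\varepsilon^{-1}-A_0^{-1})\,A_0(\mu+A_0)^{-1}$, with the two outer factors bounded uniformly in $\mu$ and $\varepsilon$ (the paper phrases this via sectorial estimates $\|A_\varepsilon(\mu+A_\varepsilon)^{-1}\|\leq 1+M_\varphi$ rather than the spectral theorem, and handles the $X_\varepsilon^{1/2}$ norm by the same commutation of $A_\varepsilon^{1/2}$ with the resolvent that you make explicit).
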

\begin{proof}
The first part is an immediate consequence of Lemma \ref{Rate_of_convergence}. Let $\rho(A_\varepsilon)$ be the resolvent set of the operator $A_\varepsilon$, $\varepsilon\in [0,\varepsilon_0]$. If  $\mu\in \rho(-A_\varepsilon)\cap \rho(-A_0)$, we choose $\varphi\in (\frac{\pi}{2},\pi)$ suitable in order to get the sectorial estimates
$$
\|(\mu+A_\varepsilon)^{-1}\|_{\LL(L^2)}\leq \frac{M_\varphi}{|\mu|},\quad\varepsilon\in (0,\varepsilon_0],\quad\tn{and}\quad\|(\mu+A_0)^{-1}\|_{\LL(L^2_{\Omega_0})}\leq \frac{M_\varphi}{|\mu|}.
$$
Therefore,
$\|A_\varepsilon(\mu+A_\varepsilon)^{-1}\|_{\LL(L^2)}\leq 1+M_\varphi$ for $\varepsilon\in (0,\varepsilon_0]$ and $\|A_0(\mu+A_0)^{-1}\|_{\LL(L^2_{\Omega_0})}\leq 1+M_\varphi.$
But if $g\in L^2_{\Omega_0}$, we can write
$$
A_\varepsilon^{\frac{1}{2}}\Big( (\mu+A_\varepsilon)^{-1} - (\mu+A_0)^{-1} \Big)g =  A_\varepsilon(\mu+A_\varepsilon)^{-1} A_\varepsilon^{\frac{1}{2}}(A_\varepsilon^{-1}-A_0^{-1})A_0 (\mu+A_0)^{-1}g
$$
and thus
\begin{align*}
\| (\mu+A_\varepsilon)^{-1} - & (\mu +A_0)^{-1} \|_{\LL(L^2_{\Omega_0},X_\varepsilon^{\frac{1}{2}})} \\
& \leq \|A_\varepsilon(\mu+A_\varepsilon)^{-1}\|_{\LL(L^2_{\Omega_0})}  \|A_\varepsilon^{\frac{1}{2}} (A_\varepsilon^{-1}-A_0^{-1})\|_{\LL(L^2_{\Omega_0})} \| A_0 (\mu+A_0)^{-1}\|_{\LL(L^2_{\Omega_0})}  \\
& \leq \|A_\varepsilon(\mu+A_\varepsilon)^{-1}\|_{\LL(L^2)}  \|A_\varepsilon^{\frac{1}{2}} (A_\varepsilon^{-1}-A_0^{-1})\|_{\LL(L^2_{\Omega_0})} \| A_0 (\mu+A_0)^{-1}\|_{\LL(L^2_{\Omega_0})}  \\
& \leq C (\|p_\varepsilon-p_0\|_{L^{\infty}(\Omega_1)}+\varepsilon)^{\frac{1}{2}},
\end{align*}
for some constant $C=C(\varphi)>0$ independent of $\mu$ and $\varepsilon$. 
\end{proof}

Next we obtain the rate of convergence of nonlinear semigroups. We will follow \cite{Santamaria2017} that improves (for the Morse-Smale case) the results presented by \cite{Arrieta}. For our purposes we just need to consider the time $t=1$.  In fact, for each $\varepsilon$, the time one map $T_\varepsilon(1)$ generates a discrete semigroup $\{T_\varepsilon(1)^n:n\in \N\}$ with the same attractor $\mathcal{A}_\varepsilon$ of $T_\varepsilon(t)$.

\begin{theo}\label{continuity_nonlinear_semigroup}
For each $w_0\in \mathcal{A}_0$, there is a positive constant $C$ independent of $\varepsilon$ such that
$$
\|T_\varepsilon(1)w_0-T_0(1)w_0\|_{X_\varepsilon^{\frac{1}{2}}}\leq C(\|p_\varepsilon-p_0\|_{L^\infty(\Omega_1)}+\varepsilon)^\frac{1}{2}|\log(\|p_\varepsilon-p_0\|_{L^\infty(\Omega_1)}+\varepsilon)|. 
$$
\end{theo}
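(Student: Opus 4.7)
The plan is to insert the Dunford functional calculus representation of the semigroup difference together with the resolvent estimate of Corollary~\ref{resolvent_estimate} into the variation of constants formula \eqref{semigroupononlinear}. Writing $\tau_\varepsilon := \|p_\varepsilon - p_0\|_{L^\infty(\Omega_1)} + \varepsilon$, I would first decompose
\begin{align*}
T_\varepsilon(1)w_0 - T_0(1)w_0 &= \big(e^{-A_\varepsilon} - e^{-A_0}\big) w_0 \\
&\quad + \int_0^1 \big(e^{-A_\varepsilon(1-s)} - e^{-A_0(1-s)}\big) f(T_0(s)w_0)\,ds \\
&\quad + \int_0^1 e^{-A_\varepsilon(1-s)}\big(f(T_\varepsilon(s)w_0) - f(T_0(s)w_0)\big)\,ds =: I_1 + I_2 + I_3.
\end{align*}
Since $w_0 \in \mathcal{A}_0 \subset L^2_{\Omega_0}$ is uniformly bounded, and since $T_0(s)w_0$ is spatially constant on $\Omega_0$ so that $f(T_0(s)w_0) \in L^2_{\Omega_0}$ with norm bounded uniformly in $s \in [0,1]$, the terms $I_1$ and $I_2$ only require an estimate of the semigroup difference in $\LL(L^2_{\Omega_0}, X_\varepsilon^{1/2})$.

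Next I would represent this difference via the Dunford integral
$$
e^{-A_\varepsilon t} - e^{-A_0 t} = \frac{1}{2\pi i}\int_\Gamma e^{\mu t}\big[(\mu + A_\varepsilon)^{-1} - (\mu + A_0)^{-1}\big]\,d\mu,
$$
along a Hankel-type contour $\Gamma$ in the sector $\Sigma_{\bar\lambda,\phi}$ of Corollary~\ref{resolvent_estimate}. Two complementary bounds follow. The standard analyticity estimate applied to each semigroup separately gives
$$
\big\|e^{-A_\varepsilon t} - e^{-A_0 t}\big\|_{\LL(L^2_{\Omega_0}, X_\varepsilon^{1/2})} \leq C t^{-1/2},
$$
while inserting the resolvent bound of Corollary~\ref{resolvent_estimate} together with $\int_\Gamma |e^{\mu t}|\,|d\mu| \leq C/t$ yields
$$
\big\|e^{-A_\varepsilon t} - e^{-A_0 t}\big\|_{\LL(L^2_{\Omega_0}, X_\varepsilon^{1/2})} \leq C\tau_\varepsilon^{1/2}/t.
$$
Taking the minimum and splitting the time integration at the crossover $t = \tau_\varepsilon$ produces
$$
\int_0^1 \min\!\big(C t^{-1/2},\, C \tau_\varepsilon^{1/2}/t\big)\,dt \leq 2\tau_\varepsilon^{1/2} + \tau_\varepsilon^{1/2}|\log \tau_\varepsilon| \leq C\tau_\varepsilon^{1/2}|\log\tau_\varepsilon|,
$$
which controls $I_2$ and (taking only $t=1$) also $I_1$.

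For $I_3$ I would use the $\varepsilon$-uniform continuous embedding $X_\varepsilon^{1/2} \hookrightarrow L^\infty$, which makes $f$ globally Lipschitz from $X_\varepsilon^{1/2}$ into $L^2$ on the $\varepsilon$-uniformly bounded trajectory, together with the standard bound $\|e^{-A_\varepsilon(1-s)}\|_{\LL(L^2, X_\varepsilon^{1/2})} \leq C(1-s)^{-1/2}$. Setting $\phi(t) := \|T_\varepsilon(t)w_0 - T_0(t)w_0\|_{X_\varepsilon^{1/2}}$ and applying the previous analysis at arbitrary $t \in (0,1]$ instead of $t=1$, one obtains the singular Gronwall inequality
$$
\phi(t) \leq C \tau_\varepsilon^{1/2}|\log\tau_\varepsilon| + C\int_0^t (t-s)^{-1/2} \phi(s)\,ds,
$$
whose classical resolution yields $\phi(1) \leq C\tau_\varepsilon^{1/2}|\log\tau_\varepsilon|$. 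The principal technical obstacle is the compatibility between $A_0$ (acting on $L^2_{\Omega_0}$) and $A_\varepsilon$ (acting on $L^2$): the Dunford representation of the difference must be interpreted as an operator $L^2_{\Omega_0} \to X_\varepsilon^{1/2}$, and the contour bound $\int_\Gamma |e^{\mu t}|\,|d\mu| \leq C/t$ must be verified with constants independent of $\varepsilon$, which follows from the $\varepsilon$-uniform sectorial constants already invoked in the proof of Corollary~\ref{resolvent_estimate}.
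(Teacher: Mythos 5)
Your proposal is correct and follows essentially the same route as the paper: the same splitting of the variation of constants formula, the same pair of complementary bounds $Ct^{-1/2}$ and $C\tau_\varepsilon^{1/2}t^{-1}$ on $\|e^{-A_\varepsilon t}-e^{-A_0 t}\|_{\LL(L^2_{\Omega_0},X_\varepsilon^{1/2})}$ obtained from analyticity and from Corollary~\ref{resolvent_estimate} via the Dunford integral, and the same singular Gronwall conclusion. The only cosmetic difference is that you perform the crossover integration $\int_0^1\min(t^{-1/2},\tau_\varepsilon^{1/2}t^{-1})\,dt$ explicitly where the paper invokes Lemma 3.10 of its reference, and you correctly take the minimum of the two bounds where the paper's displayed $l_\varepsilon$ is (apparently by typo) written as a maximum.
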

\begin{proof}
For each $\varepsilon\in [0,\varepsilon_0]$ the operator $A_\varepsilon$ generates an analytic semigroup $\{e^{-A_\varepsilon t}\,;\,t\geq 0\}$ which is given by
$$
e^{-A_\varepsilon t}=\frac{1}{2\pi i}\int_{\Gamma} e^{\mu t}(\mu+A_\varepsilon)^{-1}\,d\mu,
$$
where $\Gamma$ is the boundary of $\Sigma_{\lambda,\varphi}=\{\mu\in \mathbb{C}\,;\, |\tn{arg}(\mu+\lambda)|\leq \varphi\}\setminus \{\mu\in \mathbb{C};|\mu+\lambda|\leq r\}$ for some small $r$ and $\varphi\in (\frac{\pi}{2},\pi)$,  oriented towards the increasing imaginary part.  It is clear that
$$
\|e^{-A_\varepsilon t}\|_{\LL(L^2,X_\varepsilon^{\frac{1}{2}})}\leq M t^{-\frac{1}{2}}e^{-\lambda t},\quad t> 0,\quad \varepsilon\in (0,\varepsilon_0];
$$
$$
\|e^{-A_0 t}\|_{\LL(L^2_{\Omega_0},X_\varepsilon^{\frac{1}{2}})}\leq M t^{-\frac{1}{2}}e^{-\lambda t},\quad t> 0,
$$
where $M$ is a positive constant independent of $\varepsilon$. Thus for $t>0$,
\begin{align*}
\|e^{-A_\varepsilon t}-e^{-A_0 t}\|_{\LL(L^2_{\Omega_0},X_\varepsilon^{\frac{1}{2}})} & \leq \|e^{-A_\varepsilon t}\|_{\LL(L^2_{\Omega_0},X_\varepsilon^{\frac{1}{2}})}+\|e^{-A_0 t}\|_{\LL(L^2_{\Omega_0},X_\varepsilon^{\frac{1}{2}})} \\
& \leq M t^{-\frac{1}{2}}e^{-\lambda t}+M t^{-\frac{1}{2}}e^{-\lambda t}\\
&  \leq 2M t^{-\frac{1}{2}}e^{-\lambda t}.
\end{align*}
Moreover, using the Corollary \ref{resolvent_estimate}, we can prove that
\begin{align*}
\|e^{-A_\varepsilon t}-e^{-A_0 t}\|_{\LL(L^2_{\Omega_0},X_\varepsilon^{\frac{1}{2}})} & \leq \frac{1}{2\pi}\int_\lambda
|e^{\mu t}|\|(\mu+A_\varepsilon)^{-1}-(\mu+A_0)^{-1}\|_{\LL(L^2_{\Omega_0},X_\varepsilon^{\frac{1}{2}})} \, |d\mu| \\
& \leq C(\|p_\varepsilon-p_0\|_{L^{\infty}(\Omega_1)}+\varepsilon)^\frac{1}{2} t^{-1}e^{-\lambda t}.
\end{align*}

Note that the terms $ t^{-\frac{1}{2}}$ and $t^{-1}$ in the the estimates above originates a singularity in the variation of constants formula. This is the main difficulty in estimating the nonlinear semigroups. In \cite{Arrieta} the authors performed an interpolation of these terms together with the rate of convergence of resolvent operators which resulted in the considerable loss in the rate of convergence of attractors. In the situation where the limiting problems is Morse-Smale, the authors in \cite{Santamaria2017} had the same problem, however they used  the following estimate (placed in our context). If we denote $\tau(\varepsilon)=(\|p_\varepsilon-p_0\|_{L^{\infty}(\Omega_1)}+\varepsilon)^\frac{1}{2} $ and $l_\varepsilon(t)=\min\{t^{-\frac{1}{2}},\tau(\varepsilon)t^{-1}\}$, then 
\begin{equation}\label{lemmasantamaria}
\int_{-\infty}^\tau l_\varepsilon(\tau-r)e^{-\lambda(\tau-r)}\,dr\leq C\tau(\varepsilon)|\log(\tau(\varepsilon))|.
\end{equation} 

Since the nonlinear semigroup is given by \eqref{semigroupononlinear}, then for $0<t\leq 1$, we have
\begin{multline*}
\|T_\varepsilon(t)w_0-T_0(t)w_0\|_{X_\varepsilon^{\frac{1}{2}}} \leq \|(e^{-A_\varepsilon t}-e^{-A_0 t})w_0\|_{X_\varepsilon^{\frac{1}{2}}} \\ +\int_0^{t} \|e^{-A_\varepsilon(t-s)}f(T_\varepsilon(s)w_0)-e^{-A_0(t-s)}f(T_0(s)w_0)\|_{X_\varepsilon^{\frac{1}{2}}}\,ds,
\end{multline*}
but
\begin{align*}
\int_0^{t} \|e^{-A_\varepsilon(t-s)}f(T_\varepsilon(s)w_0)-& e^{-A_0(t-s)}f(T_0(s)w_0)\|_{X_\varepsilon^{\frac{1}{2}}}\,ds \\
&\leq \int_0^{t} \|e^{-A_\varepsilon(t-s)}  [f(T_\varepsilon(s)w_0) - f(T_0(s)w_0) ]\|_{X_\varepsilon^{\frac{1}{2}}}\,ds \\
& + \int_0^{t} \| [e^{-A_\varepsilon(t-s)}-e^{-A_0(t-s)}]f(T_0(s)w_0) \|_{X_\varepsilon^{\frac{1}{2}}}\,ds\\ 
&\leq C \int_0^{t} (t-s)^{-\frac{1}{2}} \|T_\varepsilon(s)w_0 - T_0(s)w_0]\|_{X_\varepsilon^{\frac{1}{2}}}\,ds \\
& + C\int_0^{t} l_\varepsilon(t-s) e^{-\lambda(t-s)}\|f(T_0(s)w_0) \|_{X_\varepsilon^{\frac{1}{2}}} \,ds.
\end{align*}
By etimate \eqref{lemmasantamaria} and  Gronwall's inequality (see \cite{A.N.Carvalho2010} and  \cite{Henry1980}  for singular Gronwall Lemma) we have
$$
\|T_\varepsilon(t)w_0-T_0(t)w_0\|_{X_\varepsilon^{\frac{1}{2}}}\leq C\tau(\varepsilon)|\log(\tau(\varepsilon))|+C\tau(\varepsilon)|\log(\tau(\varepsilon))|e^{Kt},
$$
where $K=C\Gamma(\frac{1}{2})^{\frac{1}{2}}$. Now the result follows taking $t=1$.
\end{proof}

\section{Rate of Convergence of Eigenvalues and Equilibria}\label{Rate of Convergence of Eigenvalues and Equilibria}
In this section we will obtain the rate of convergence of eigenvalues, spectral projection and equilibrium points.

The convergence of eigenvalues and eigenfunctions of the linear operators was proved in \cite{Rodriguez-Bernal1998} and the properties about the compact convergence of the spectral projections was studied in details in  several works (see, for example, \cite{Carbone2008,Carvalho2006} and \cite{Rodriguez-Bernal1998}). 

In the next result we will follow \cite{Arrieta}, where it was considered rate of convergence for the  eigenvalues and spectral projections. We also state the gap condition proved in \cite{Carbone2006}.     

\begin{lem}\label{lemma-evconv+gap}
If $\lambda^\varepsilon\in \sigma(A_\varepsilon)$, $\varepsilon\in [0,\varepsilon_0]$, and $\lambda^\varepsilon\overset{\varepsilon\to 0}\longrightarrow \lambda^0$, then 
$$
|\lambda^\varepsilon-\lambda^0|\leq C(\|p_\varepsilon-p_0\|_{L^\infty(\Omega_1)}+\varepsilon)^\frac{1}{2}.
$$ 
Moreover, if we denote $\sigma(A_\varepsilon)=\{\lambda_i^\varepsilon\}_{i=0}^\infty$ (ordered and counting multiplicity), we have the following gap condition 
$$
\lambda_{i+1}^\varepsilon-\lambda_{i}^\varepsilon \overset{i\to\infty}{\longrightarrow} \infty.
$$
\end{lem}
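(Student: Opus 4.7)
The plan is to exploit the resolvent estimate in Corollary~\ref{resolvent_estimate} in both directions. For the forward direction (passing from a limit eigenvalue to a perturbed one), fix an $L^2$-normalized eigenfunction $\phi^0\in\mathcal{D}(A_0)\subset L^2_{\Omega_0}$ with $A_0\phi^0=\lambda^0\phi^0$. Then $A_0^{-1}\phi^0=(\lambda^0)^{-1}\phi^0$ and Corollary~\ref{resolvent_estimate} yields
$$
\|A_\varepsilon^{-1}\phi^0-(\lambda^0)^{-1}\phi^0\|_{X_\varepsilon^{1/2}}\le C\tau(\varepsilon),
$$
where $\tau(\varepsilon)=(\|p_\varepsilon-p_0\|_{L^\infty(\Omega_1)}+\varepsilon)^{1/2}$. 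Since $A_\varepsilon^{-1}$ is self-adjoint on $L^2$ and $X_\varepsilon^{1/2}\hookrightarrow L^2$ with constant independent of $\varepsilon$, the standard self-adjoint spectral distance bound gives $\tn{dist}((\lambda^0)^{-1},\sigma(A_\varepsilon^{-1}))\le C\tau(\varepsilon)$; inverting and using that $\lambda^\varepsilon\to\lambda^0$ keeps both quantities bounded and bounded away from $0$, this supplies the estimate for a representative eigenvalue converging to $\lambda^0$.

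For the converse direction (starting from a given $\lambda^\varepsilon\to\lambda^0$), I would use a Dunford--Riesz spectral projection argument. Choosing a small circle $\gamma$ enclosing only $\lambda^0$ in $\sigma(A_0)$, set
$$
Q_\varepsilon=\frac{1}{2\pi i}\oint_\gamma (\mu+A_\varepsilon)^{-1}\,d\mu,\qquad Q_0=\frac{1}{2\pi i}\oint_\gamma (\mu+A_0)^{-1}\,d\mu.
$$
The second estimate of Corollary~\ref{resolvent_estimate}, integrated along $\gamma$, gives $\|Q_\varepsilon-Q_0\|_{\LL(L^2_{\Omega_0},X_\varepsilon^{1/2})}\le C\tau(\varepsilon)$, and a standard argument shows that $Q_\varepsilon$ and $Q_0$ have equal finite rank for $\varepsilon$ small, producing a bijection between the $A_\varepsilon$-eigenvalues inside $\gamma$ and those of $A_0$. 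Rayleigh-quotient identities applied to test vectors of the form $Q_\varepsilon\phi^0$ (with $\phi^0$ ranging over a basis of the $A_0$-eigenspace) then transfer the $\tau(\varepsilon)$-rate to each matched $A_\varepsilon$-eigenvalue, closing the first claim.

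The gap condition follows from the one-dimensional Weyl asymptotic. For $\varepsilon>0$, $A_\varepsilon$ is a regular self-adjoint Sturm--Liouville operator on $(0,1)$ with smooth coefficients and $p_\varepsilon\ge m_0>0$, so $\sqrt{\lambda_i^\varepsilon}\sim i\pi/\int_0^1 p_\varepsilon^{-1/2}\,dx$ as $i\to\infty$, and hence
$$
\lambda_{i+1}^\varepsilon-\lambda_i^\varepsilon=\bigl(\sqrt{\lambda_{i+1}^\varepsilon}+\sqrt{\lambda_i^\varepsilon}\bigr)\bigl(\sqrt{\lambda_{i+1}^\varepsilon}-\sqrt{\lambda_i^\varepsilon}\bigr)\longrightarrow\infty.
$$
For the limit operator $A_0$, the spectrum splits into the Sturm--Liouville eigenvalues of $-(p_0u_x)_x+(\lambda+c)u$ on the disconnected set $\Omega_1$ (with Neumann data at $0,1$ and matching conditions at $x_1,x_2$ inherited from $H^1_{\Omega_0}$), together with the single isolated eigenvalue $\lambda+c_{\Omega_0}$ coming from the ODE on $\Omega_0$; the former again satisfies the same quadratic Weyl growth, so the gap condition persists at $\varepsilon=0$.

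The main obstacle I anticipate is the asymmetry of Corollary~\ref{resolvent_estimate}: its left-hand side controls operators only when applied to elements of $L^2_{\Omega_0}$, whereas $A_\varepsilon$-eigenfunctions are not constant on $\Omega_0$. Hence the direction starting from a given $\lambda^\varepsilon$ cannot be handled by the direct self-adjoint trick available from the $A_0$-side; the projection argument, combined with the fact that $A_\varepsilon$-eigenfunctions become asymptotically constant on $\Omega_0$, is exactly what bridges the two functional frameworks.
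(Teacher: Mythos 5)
Your proposal is correct and follows essentially the same route as the paper: the paper likewise defines the spectral projections by a Dunford contour integral around $\lambda^0$, deduces $\|Q_\varepsilon-Q_0\|_{\LL(L^2_{\Omega_0},X_\varepsilon^{1/2})}\leq C(\|p_\varepsilon-p_0\|_{L^\infty(\Omega_1)}+\varepsilon)^{1/2}$ from Corollary \ref{resolvent_estimate}, transfers this to the eigenvalues by testing against an $A_0$-eigenfunction (via an explicit computation with $A_\varepsilon^{-1}Q_\varepsilon u^0$ rather than your Rayleigh-quotient phrasing), and obtains the gap condition from the same Sturm--Liouville asymptotics $\lambda_i^\varepsilon=i^2\pi^2/l^2+o(i)$ that you invoke, there quoted from the literature. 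Your additional ``forward'' step via the self-adjoint approximate-eigenvalue bound is not needed for the statement as given, but it is harmless.
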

\begin{proof}
Let $\lambda^0 \in \sigma(A_0)$ be an isolated eigenvalue. We consider an appropriated closed curve $\Gamma$ in $\rho(-A_0)$ around $\lambda^0$ and define the spectral projection
$$
Q_\varepsilon=\frac{1}{2\pi i}\int_{\Gamma} (\mu+A_\varepsilon)^{-1}\,d\mu,\quad \varepsilon\in[0,\varepsilon_0].
$$
It follows from Corollary \ref{resolvent_estimate} that
$$
\|Q_\varepsilon-Q_0\|_{\LL(L^2_{\Omega_0},X_\varepsilon^\frac{1}{2})}\leq C(\|p_\varepsilon-p_0\|_{L^\infty(\Omega_1)}+\varepsilon)^\frac{1}{2}.
$$
If we have $\lambda^\varepsilon \in \sigma(A_\varepsilon)$ such that $\lambda^\varepsilon \overset{\varepsilon\to 0}\longrightarrow \lambda^0$ then for $\varepsilon$ sufficiently small there is $u^0\in\tn{Ker}(\lambda_0-A_0)$ with $\|u^0\|_{X_\varepsilon^\frac{1}{2}}=1$ such that $Q_\varepsilon u^0$ is eigenfunction of $A_\varepsilon$ associated with $\lambda^\varepsilon$, thus
$$
|\lambda^\varepsilon-\lambda^0|\leq \|(\lambda^\varepsilon-\lambda^0)u^0\|_{X_\varepsilon^\frac{1}{2}}=\|\lambda^\varepsilon Q_0u^0-\lambda^0u^0\|_{X_\varepsilon^\frac{1}{2}},
$$
but
\begin{align*}
\|\lambda^\varepsilon Q_0u^0-\lambda^0u^0\|_{X_\varepsilon^\frac{1}{2}} &\leq \|\lambda^\varepsilon Q_0u^0-\lambda^0Q_\varepsilon u^0+\lambda^0Q_\varepsilon u^0 -\lambda^0u^0 \|_{X_\varepsilon^\frac{1}{2}}\\
& \leq |\lambda^\varepsilon\lambda^0|\|\frac{1}{\lambda^0} Q_0u^0-\frac{1}{\lambda^\varepsilon}Q_\varepsilon u^0\|_{X_\varepsilon^\frac{1}{2}}+\|\lambda^0(Q_\varepsilon-Q_0)u^0\|_{X_\varepsilon^\frac{1}{2}}\\
& \leq |\lambda^\varepsilon\lambda^0|\|A_0^{-1}Q_0u^0-A_\varepsilon^{-1} Q_\varepsilon u^0\|_{X_\varepsilon^\frac{1}{2}}+|\lambda^0|\|(Q_\varepsilon-Q_0)u^0\|_{X_\varepsilon^\frac{1}{2}},
\end{align*}
and
\begin{align*}
\|A_0^{-1}Q_0u^0-A_\varepsilon^{-1} Q_\varepsilon u^0\|_{X_\varepsilon^\frac{1}{2}}& = \|A_0^{-1}Q_0u^0-A_\varepsilon^{-1}Q_0u^0+A_\varepsilon^{-1}Q_0u^0 -A_\varepsilon^{-1} Q_\varepsilon u^0\|_{X_\varepsilon^\frac{1}{2}}\\
&\leq \|(A_0^{-1}-A_\varepsilon^{-1})Q_0u^0\|_{X_\varepsilon^\frac{1}{2}}+\|A_\varepsilon^{-1}(Q_0-Q_\varepsilon)u^0\|_{X_\varepsilon^\frac{1}{2}}.
\end{align*}
Hence the estimate for $|\lambda^\varepsilon-\lambda^0|$ follows. 

In \cite{Carbone2006} it was proved that the eigenvalues of $-A_\varepsilon$ has a gap condition by characterizing these eigenvalues, that is, if $\sigma(-A_\varepsilon)=\{\mu_i^\varepsilon\}_{i=0}^\infty$ then
$$
\mu_i^\varepsilon=-\frac{1}{l^2}i^2\pi^2+o(i)
$$      
as $i\to \infty$, where $l=\int_0^1p_\varepsilon(s)^{-\frac{1}{2}}\,ds$. Consequently  $\lambda_i^\varepsilon-\lambda_{i+1}^\varepsilon\overset{i\to\infty}\longrightarrow \infty.$
\end{proof}

Recall that we denote $\mathcal{E}_\varepsilon$ the set of the equilibria solutions of the $A_\varepsilon$ and we assume that $\mathcal{E}_0$ is composed of hyperbolic equilibria, thus for $\varepsilon$ sufficiently small $\mathcal{E}_\varepsilon$ is composed of finite number of hyperbolic equilibria. The rate of convergence of equilibrium points can be obtained as follows.

\begin{theo}
Let $u_*^0\in\mathcal{E}_0$. Then for $\varepsilon$ sufficiently small (we still denote $\varepsilon\in (0,\varepsilon_0]$), there is $\delta>0$ such that the equation $A_\varepsilon u-f(u)=0$ has only solution $u_*^\varepsilon\in \{u\in X_\varepsilon^\frac{1}{2}\,;\,\|u-u_*^0\|_{X_\varepsilon^\frac{1}{2}}\leq \delta\}$. Moreover
\begin{equation}\label{rate_of_equilibrium}
\|u_*^\varepsilon-u_*^0\|_{X_\varepsilon^\frac{1}{2}}\leq C(\|p_\varepsilon-p_0\|_{L^\infty(\Omega_1)}+\varepsilon)^\frac{1}{2}.
\end{equation} 
\end{theo}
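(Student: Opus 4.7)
The strategy is to reformulate the equilibrium equation $A_\varepsilon u = f(u)$ as a fixed-point problem for the perturbation $v = u - u_*^0$, and to apply a uniform contraction argument whose forcing term is controlled by Corollary \ref{resolvent_estimate}. Using $A_0 u_*^0 = f(u_*^0)$ and the Taylor expansion $f(u_*^0 + v) = f(u_*^0) + f'(u_*^0)\,v + g(v)$, where the $C^2$-regularity of $f$ together with the uniform embedding $X_\varepsilon^\frac{1}{2} \hookrightarrow H^1 \hookrightarrow L^\infty$ yield $\|g(v)\|_{L^2} \leq C\|v\|_{X_\varepsilon^\frac{1}{2}}^2$ on a fixed small ball, the equation $A_\varepsilon(u_*^0 + v) = f(u_*^0 + v)$ becomes, after applying $A_\varepsilon^{-1}$,
\begin{equation*}
(I - A_\varepsilon^{-1} f'(u_*^0))\, v \;=\; A_\varepsilon^{-1} g(v) \;+\; (A_\varepsilon^{-1} - A_0^{-1}) f(u_*^0).
\end{equation*}

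The crucial observation for the right-hand side is that $u_*^0$ is constant on $\Omega_0$, so $f(u_*^0)$ is constant on $\Omega_0$ and hence belongs to $L^2_{\Omega_0}$; Corollary \ref{resolvent_estimate} then yields the forcing estimate
$$
\|(A_\varepsilon^{-1} - A_0^{-1}) f(u_*^0)\|_{X_\varepsilon^\frac{1}{2}} \;\leq\; C(\|p_\varepsilon - p_0\|_{L^\infty(\Omega_1)} + \varepsilon)^\frac{1}{2}.
$$
Next I would establish that, for $\varepsilon$ small, the operator $I - A_\varepsilon^{-1} f'(u_*^0)$ is invertible on $X_\varepsilon^\frac{1}{2}$ with inverse bounded uniformly in $\varepsilon$. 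Hyperbolicity of $u_*^0$ gives $0 \notin \sigma(A_0 - f'(u_*^0))$; since $f'(u_*^0) \in L^\infty$ is a bounded multiplier, the spectral-convergence argument of the previous lemma adapts to $A_\varepsilon - f'(u_*^0)$ and keeps $0$ at a positive distance from $\sigma(A_\varepsilon - f'(u_*^0))$, which, combined with the uniform bound $\|A_\varepsilon^{-1}\|_{\LL(L^2, X_\varepsilon^\frac{1}{2})} \leq C$, produces the desired uniform invertibility.

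With these two ingredients in place, the mapping
$$
\Phi_\varepsilon(v) \;=\; (I - A_\varepsilon^{-1} f'(u_*^0))^{-1}\bigl[A_\varepsilon^{-1} g(v) + (A_\varepsilon^{-1} - A_0^{-1}) f(u_*^0)\bigr]
$$
is a contraction of a sufficiently small closed ball $\overline{B_\delta(0)} \subset X_\varepsilon^\frac{1}{2}$ into itself: the quadratic bound on $g$ makes $\Phi_\varepsilon$ Lipschitz with constant $\leq 1/2$ once $\delta$ is small, and the forcing estimate ensures the self-mapping for $\varepsilon$ small. The unique fixed point $v^\varepsilon$ supplies the local uniqueness of $u_*^\varepsilon = u_*^0 + v^\varepsilon$, while the standard bound $\|v^\varepsilon\|_{X_\varepsilon^\frac{1}{2}} \leq 2\|\Phi_\varepsilon(0)\|_{X_\varepsilon^\frac{1}{2}}$ gives exactly (\ref{rate_of_equilibrium}). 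The main obstacle is the uniform-invertibility step: because the phase spaces $X_\varepsilon^\frac{1}{2}$ depend on $\varepsilon$ and the embedding $H^1 \hookrightarrow X_\varepsilon^\frac{1}{2}$ degenerates as $\varepsilon \to 0$, one cannot route the inverse bound through $H^1$; instead one must work directly in the $X_\varepsilon^\frac{1}{2}$ geometry, exploiting the compact-type resolvent convergence of Corollary \ref{resolvent_estimate} together with the hyperbolic spectral gap of $A_0 - f'(u_*^0)$.
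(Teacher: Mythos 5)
Your argument is correct and follows essentially the same route as the paper: linearize at $u_*^0$ with $V_0=-f'(u_*^0)$, feed the $L^2_{\Omega_0}$-valued data built from $f(u_*^0)$ into Corollary \ref{resolvent_estimate} to obtain the $(\|p_\varepsilon-p_0\|_{L^\infty(\Omega_1)}+\varepsilon)^{1/2}$ forcing term, and absorb the superlinear remainder via the uniform invertibility of the linearization coming from hyperbolicity. The only (harmless) difference is packaging: the paper phrases the fixed point through $(A_\varepsilon+V_0)^{-1}$ and the second resolvent identity and cites existence of $u_*^\varepsilon$ from the literature, whereas you write it through $(I-A_\varepsilon^{-1}f'(u_*^0))^{-1}$ and recover existence, uniqueness and the rate simultaneously from the contraction.
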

\begin{proof}
The proof is the same as given in \cite{Carbone2008}. Here we just need to prove the estimates \eqref{rate_of_equilibrium}. We have $u_*^\varepsilon$ and $u_*^0$ given by 
$$
u_*^0=(A_0+V_0)^{-1}[f(u_*^0)+V_0u_*^0]\quad\tn{and}\quad u_*^\varepsilon=(A_\varepsilon+V_0)^{-1}[f(u_*^\varepsilon)+V_0 u_*^\varepsilon],
$$ 
where $V_0=-f'(u_*^0)$. Thus
\begin{align*}
\|u_*^\varepsilon-u_*^0\|_{X_\varepsilon^\frac{1}{2}}&\leq \|(A_\varepsilon+V_0)^{-1}[f(u_*^\varepsilon)+V_0 u_*^\varepsilon]-(A_0+V_0)^{-1}[f(u_*^0)+V_0u_*^0]\|_{X_\varepsilon^\frac{1}{2}}\\
&\leq \|(A_\varepsilon+V_0)^{-1}[f(u_*^\varepsilon)-f(u_*^0)+V_0(u_*^\varepsilon-u_*^0)]\|_{X_\varepsilon^\frac{1}{2}}\\
&+ \|[(A_\varepsilon+V_0)^{-1}-(A_0+V_0)^{-1}][f(u_*^0)+V_0 u_*^0]\|_{X_\varepsilon^\frac{1}{2}}.
\end{align*}
We can prove that
$$
(A_\varepsilon+V_0)^{-1}-(A_0+V_0)^{-1}=[I-(A_\varepsilon+V_0)^{-1}V_0](A_\varepsilon^{-1}-A_0^{-1})[I-V_0(A_0+V_0)^{-1}],
$$
which implies
$
\|[(A_\varepsilon+V_0)^{-1}-(A_0+V_0)^{-1}][f(u_*^0)+V_0 u_*^0]\|_{X_\varepsilon^\frac{1}{2}}\leq C(\|p_\varepsilon-p_0\|_{L^\infty(\Omega_1)}+\varepsilon)^\frac{1}{2}.
$

Now we denote $z^\varepsilon=f(u_*^\varepsilon)-f(u_*^0)+V_0(u_*^\varepsilon-u_*^0)$. Since $f$ is continuously differentiable, for all $\delta>0$ there is $\varepsilon$ sufficiently small such that 
$
\|z^\varepsilon\|_{X_\varepsilon^\frac{1}{2}}\leq \delta\|u_*^\varepsilon-u_*^0\|_{X_\varepsilon^\frac{1}{2}},
$
thus
$$
\|(A_\varepsilon+V_0)^{-1}z^\varepsilon\|_{X_\varepsilon^\frac{1}{2}}\leq \delta \|(A_\varepsilon+V_0)^{-1}\|_{\LL(L^2_{\Omega_0},X_\varepsilon^\frac{1}{2})}\|u_*^\varepsilon-u_*^0\|_{X_\varepsilon^\frac{1}{2}}. 
$$
We choose $\delta$ sufficiently small such that $\delta\|(A_\varepsilon+V_0)^{-1}\|_{\LL(L^2_{\Omega_0},X_\varepsilon^\frac{1}{2})}\leq\frac{1}{2}$, thus
$$
\|u_*^\varepsilon-u_*^0\|_{X_\varepsilon^\frac{1}{2}}\leq C(\|p_\varepsilon-p_0\|_{L^\infty(\Omega_1)}+\varepsilon)^\frac{1}{2}+\frac{1}{2}\|u_*^\varepsilon-u_*^0\|_{X_\varepsilon^\frac{1}{2}}.
$$
\end{proof}
\begin{cor}
The family  $\{\mathcal{E}_\varepsilon\}_{\varepsilon\in (0,\varepsilon_0]}$ is continuous at $\varepsilon=0$. Moreover if $\mathcal{E}_0=\{u_*^{0,1},...,u_*^{0,k}\}$ then for $\varepsilon$ sufficiently small, $\mathcal{E}_\varepsilon=\{u_*^{\varepsilon,1},...,u_*^{\varepsilon,k}\}$  and 
$$
\|u_*^{\varepsilon,i}-u_*^{0,i}\|_{X_\varepsilon^{\frac{1}{2}}}\leq C(\|p_\varepsilon-p_0\|_{L^\infty(\Omega_1)}+\varepsilon)^\frac{1}{2},\quad i=1,...,k.
$$
\end{cor}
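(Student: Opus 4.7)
The plan is to combine the local existence/uniqueness/rate statement of the previous theorem, which operates one equilibrium at a time inside a $\delta$-ball, with the already established continuity of the family $\{\mathcal{E}_\varepsilon\}$ at $\varepsilon=0$ from \cite{Carbone2008}. Since hyperbolicity and finiteness of $\mathcal{E}_0$ are in force, the proof reduces to a labelling argument and an invocation of the per-equilibrium rate estimate.

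First, I would apply the previous theorem to each $u_*^{0,i}\in \mathcal{E}_0$ separately, $i=1,\dots,k$. This produces, for each $i$, constants $\delta_i>0$ and $C_i>0$ and, for $\varepsilon$ sufficiently small, a unique solution $u_*^{\varepsilon,i}$ of $A_\varepsilon u - f(u)=0$ inside the ball $\{u\in X_\varepsilon^{1/2}:\|u-u_*^{0,i}\|_{X_\varepsilon^{1/2}}\leq \delta_i\}$, together with the estimate
\[
\|u_*^{\varepsilon,i}-u_*^{0,i}\|_{X_\varepsilon^{1/2}}\leq C_i(\|p_\varepsilon-p_0\|_{L^\infty(\Omega_1)}+\varepsilon)^{1/2}.
\]
Shrinking $\delta := \min_i \delta_i$ further so that the $k$ balls $B_\delta(u_*^{0,i})\subset X_\varepsilon^{1/2}$ are pairwise disjoint (this is possible since $\mathcal{E}_0$ is finite and the $X_\varepsilon^{1/2}$-norm is uniformly comparable to the $H^1$-norm on $X_0^{1/2}$), the $k$ equilibria $u_*^{\varepsilon,1},\dots,u_*^{\varepsilon,k}$ are automatically distinct.

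Second, I invoke the continuity of $\{\mathcal{E}_\varepsilon\}_{\varepsilon\in[0,\varepsilon_0]}$ at $\varepsilon=0$ recorded in the introduction via \cite{Carbone2008}: every equilibrium of $A_\varepsilon$ lies arbitrarily close to some $u_*^{0,j}$ as $\varepsilon\to 0$, and no equilibrium of $\mathcal{E}_0$ is \emph{lost} in the limit because of hyperbolicity. Combined with the local uniqueness from the previous theorem, this forces $\#\mathcal{E}_\varepsilon=k$ for $\varepsilon$ small and gives a bijective labelling $\mathcal{E}_\varepsilon=\{u_*^{\varepsilon,1},\dots,u_*^{\varepsilon,k}\}$ with $u_*^{\varepsilon,i}\in B_\delta(u_*^{0,i})$.

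Setting $C:=\max_{1\leq i\leq k}C_i$ then yields the uniform rate estimate for all $i$, and the continuity of the family at $\varepsilon=0$ in Hausdorff distance is an immediate consequence of the rate. There is no substantial obstacle beyond bookkeeping: the previous theorem does all the analytic work, and the only subtle step is the bijective pairing, which is forced by disjointness of the $\delta$-balls together with the previous theorem's local uniqueness statement applied to each $u_*^{0,i}$ in turn.
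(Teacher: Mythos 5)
Your argument is correct and is essentially the one the paper intends: the corollary is stated without proof as an immediate consequence of the preceding theorem (applied to each $u_*^{0,i}$ with disjoint $\delta$-balls) together with the continuity of $\{\mathcal{E}_\varepsilon\}$ cited from \cite{Carbone2008}, which is exactly the bookkeeping you carry out. No gap.
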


\begin{rem}
The rate of convergence of eigenvalues, eigenfunctions and equilibrium points is better than rate of convergence of the nonlinear semigroup. This fact is related to the estimates that we obtained for the linear semigroup in the fractional power space and, we will see that the rate of convergence of attractors of problems \eqref{semilinear_problem} has a loss with respect to the rate of resolvent operators.   A class of problems where the rate of convergence of the resolvent operators is the same rate of convergence of the attractors is presented in \cite{CarvalhoPires2017}.
\end{rem}

\section{Rate of Convergence of Invariant Manifolds}\label{Rate of Convergence of Invariant Manifolds}
In this section we characterize the invariant manifolds $\mathcal{M}_\varepsilon$ locally as a graph of a Lipschitz function, and we guarantee that $\mathcal{M}_\varepsilon$ approaches to the invariant manifold $\mathcal{M}_0$ when the parameter $\varepsilon$ goes to zero.  This result  will be fundamental to reduce the study of the asymptotic dynamics of the problem \eqref{semilinear_problem} to a finite dimension.

The spectrum of $-A_\varepsilon$, $\varepsilon\in[0,\varepsilon_0]$, ordered and counting multiplicity is given by 
$$
...-\lambda^\varepsilon_m<-\lambda^\varepsilon_{m-1}<...<-\lambda_0^\varepsilon<0
$$
with $\{\varphi_i^\varepsilon\}_{i=0}^\infty$ the eigenfunctions related. We consider the spectral projection onto the space generated by the first $m$ eigenvalues, that is, if $\Gamma$ is an appropriated closed curve in $\rho(-A_0)$ around $\{-\lambda_0^0,...,-\lambda_{m-1}^0 \}$, then
$$
Q_\varepsilon=\frac{1}{2\pi i}\int_{\Gamma} (\mu+A_\varepsilon)^{-1}\,d\mu,\quad \varepsilon\in[0,\varepsilon_0].
$$
We observe that $Q_\varepsilon$ is a projection of finite rank and then there is an isomorphism from $Q_\varepsilon X_\varepsilon^\frac{1}{2}=\tn{span}[\varphi_0^\varepsilon,...,\varphi_{m-1}^\varepsilon]$ onto $\R^m$. Thus we can decompose $X_\varepsilon^\frac{1}{2}=Y_\varepsilon\oplus Z_\varepsilon$, where $Y_\varepsilon=Q_\varepsilon X_\varepsilon^\frac{1}{2}$ and $Z_\varepsilon=(I-Q_\varepsilon)X_\varepsilon^\frac{1}{2}$ and we define $A_\varepsilon^+=A_\varepsilon|_{Y_\varepsilon}$ and  $A_\varepsilon^-=A_\varepsilon|_{Z_\varepsilon}$ $\varepsilon\in [0,\varepsilon_0].$  The following estimates are valid (see \cite{CarvalhoPires2017}). 
\begin{itemize}
\item[(i)]$\|e^{-A_\varepsilon^+  t}z\|_{X_\varepsilon^\frac{1}{2}}\leq M e^{\beta t}\|z\|_{X_\varepsilon^\frac{1}{2}},\quad t\leq 0,\quad z\in Y_\varepsilon,$

\item[(ii)]$\|e^{-A^-_\varepsilon  t}z\|_{X_\varepsilon^\frac{1}{2}}\leq Me^{-\gamma t}\|z\|_{X_\varepsilon^\frac{1}{2}} ,\quad t\geq 0,\quad z\in Z_\varepsilon,$
\item[(iii)]$\|e^{-A_\varepsilon^+ t}-e^{-A_0^+ t}\|_{\LL(L^2_{\Omega_0},X_\varepsilon^\frac{1}{2})}\leq Me^{\beta t}l_\varepsilon(-t)
, \quad t\leq 0,$  
\item[(iv)] $\|e^{-A_\varepsilon^- t}-e^{-A_0^- t}\|_{\LL(L^2_{\Omega_0},X_\varepsilon^\frac{1}{2})}\leq Me^{-\gamma t}l_\varepsilon(t), \quad t> 0.$
\end{itemize}
where $l_\varepsilon(t)=\min\{t^{-\frac{1}{2}},(\|p_\varepsilon-p_0\|_{L^{\infty}(\Omega_1)}+\varepsilon)^\frac{1}{2} t^{-1}\}$, $\gamma=\lambda_{m}^0-\alpha(\lambda_{m}^0)^\frac12$, $-\beta = \lambda_{m-1}^0 +\alpha(\lambda_{m-1}^0)^\frac12$ for $\alpha>0$ small and $M$ independent of $\varepsilon$ and $m$.

As a consequence of Lemma \ref{lemma-evconv+gap}, for suitably chosen $\alpha$, the sum of $\gamma$ and $\beta$, as a function of $m$, can be made as large as we wish. The choice of $m$ will be used as a parameter in the proof of existence of the invariant manifold.

\begin{theo} For sufficiently large $m$ and $\varepsilon$ small, there is an invariant manifold $\mathcal{M}_\varepsilon$ for \eqref{semilinear_problem} given by
$$
\mathcal{M}_\varepsilon=\{u^\varepsilon\in X_\varepsilon^\frac{1}{2}\,;\, u^\varepsilon = Q_\varepsilon u^\varepsilon+s_{*}^\varepsilon(Q_\varepsilon u^\varepsilon)\},\quad \varepsilon\in[0,\varepsilon_0],
$$ 
where $s_\ast^\varepsilon:Y_\varepsilon\to Z_\varepsilon$ is a Lipschitz continuous map satisfying
\begin{equation}\label{estimate_invariant_manifold}
|\!|\!|s_\ast^\varepsilon-s_\ast^0 |\!|\!|=\sup_{v\in Y_0}\|s_\ast^\varepsilon(v)-s_\ast^0(v)\|_{X_\varepsilon^\frac{1}{2}}\leq C\tau(\varepsilon)|\log(\tau(\varepsilon))|,
\end{equation}
where $\tau(\varepsilon)=(\|p_\varepsilon-p_0\|_{L^\infty(\Omega_1)}+\varepsilon)^\frac{1}{2}$ and $C$  is a constant independent of $\varepsilon$.
The invariant manifold $\mathcal{M}_\varepsilon$ is exponentially attracting and the global attractor $\mathcal{A}_\varepsilon$ of the problem \eqref{semilinear_problem} lies  in $\mathcal{M}_\varepsilon$. The flow on $\mathcal{A}_\varepsilon$ is given by
$$
u^\varepsilon(t)=v^\varepsilon(t)+s_\ast^\varepsilon(v^\varepsilon(t)), \quad t\in\R,
$$ 
where $v^\varepsilon(t)$ satisfies
$$
\dot{v^\varepsilon}+A_\varepsilon^+v^\varepsilon=Q_\varepsilon f(v^\varepsilon+ s_\ast^\varepsilon(v^\varepsilon(t))).
$$
\end{theo}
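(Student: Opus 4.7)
The plan is to realize $\mathcal{M}_\varepsilon$ as a Lipschitz graph via the Lyapunov--Perron integral fixed-point equation, and then to obtain \eqref{estimate_invariant_manifold} by subtracting the equations for parameters $\varepsilon$ and $0$ and invoking the semigroup comparison estimates (iii)--(iv).

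First, since $\overline{\bigcup_{\varepsilon\in[0,\varepsilon_0]} \mathcal{A}_\varepsilon}$ is uniformly bounded, I would replace $f$ by a smooth truncation $\tilde f$ that coincides with $f$ on a neighborhood of this union and has globally small Lipschitz constant $\rho>0$ (to be fixed below). Decomposing $u^\varepsilon = v^\varepsilon + z^\varepsilon$ with $v^\varepsilon \in Y_\varepsilon$ and $z^\varepsilon \in Z_\varepsilon$, I look, for each $y_0 \in Y_\varepsilon$, for a backward-bounded trajectory on $(-\infty,0]$ with $v^\varepsilon(0)=y_0$. By variation of constants this amounts to finding a fixed point of
\begin{equation*}
\mathcal{F}_\varepsilon[v,z](t) = \Big( e^{-A_\varepsilon^+ t} y_0 + \!\int_0^t\! e^{-A_\varepsilon^+(t-s)}Q_\varepsilon\tilde f(v(s)+z(s))\,ds,\ \int_{-\infty}^t\! e^{-A_\varepsilon^-(t-s)}(I-Q_\varepsilon)\tilde f(v(s)+z(s))\,ds \Big)
\end{equation*}
in $C_b((-\infty,0]; X_\varepsilon^{1/2})$. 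Estimates (i)--(ii) together with smallness of $\rho$ (the dichotomy gap $\gamma(\varepsilon)-\beta$ is positive and uniform in $\varepsilon$ by the eigenvalue asymptotics of Section \ref{Rate of Convergence of Eigenvalues and Equilibria}) make $\mathcal{F}_\varepsilon$ a uniform contraction. Setting $s_*^\varepsilon(y_0) := z^\varepsilon(0)$, the Lipschitz property of $s_*^\varepsilon$, invariance of $\mathcal{M}_\varepsilon$, exponential attraction, the representation of the flow on $\mathcal{A}_\varepsilon$, and the inclusion $\mathcal{A}_\varepsilon\subset\mathcal{M}_\varepsilon$ (every $u\in\mathcal{A}_\varepsilon$ lies on a bounded global orbit and hence solves the fixed-point equation) follow by the standard arguments of \cite{Carbone2006,Carvalho2010a}.

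For the quantitative bound \eqref{estimate_invariant_manifold}, set $\tau(\varepsilon) := (\|p_\varepsilon-p_0\|_{L^\infty(\Omega_1)}+\varepsilon)^{1/2}$. Given $v\in Y_0$, I identify it with $Q_\varepsilon v\in Y_\varepsilon$; by Corollary \ref{resolvent_estimate} this introduces an error of order $\tau(\varepsilon)$. Subtracting the fixed-point equations for $(v^\varepsilon,z^\varepsilon)$ and $(v^0,z^0)$, I decompose each term as
\begin{equation*}
\big(e^{-A_\varepsilon^\pm(t-s)} - e^{-A_0^\pm(t-s)}\big)\tilde f(u^0(s)) + e^{-A_\varepsilon^\pm(t-s)}\big[\tilde f(u^\varepsilon(s)) - \tilde f(u^0(s))\big].
\end{equation*}
The first piece is controlled by (iii)--(iv) and, after integration against the integrable weights $e^{\beta s}$ on $(-\infty,0]$ and $e^{-\gamma\sigma}$ on $[0,\infty)$, contributes $O(\tau(\varepsilon))$. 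The second piece is bounded by $C\rho$ times $\sup_{s\leq 0}(\|v^\varepsilon(s)-v^0(s)\| + \|z^\varepsilon(s)-z^0(s)\|)$. Taking the supremum on $(-\infty,0]$ and absorbing the Lipschitz term into the left-hand side yields
\begin{equation*}
\sup_{t\leq 0}\big(\|v^\varepsilon(t)-v^0(t)\|_{X_\varepsilon^{1/2}} + \|z^\varepsilon(t)-z^0(t)\|_{X_\varepsilon^{1/2}}\big) \leq C\tau(\varepsilon),
\end{equation*}
and evaluating at $t=0$ gives \eqref{estimate_invariant_manifold}.

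The principal technical obstacle is the singular factor $l_\varepsilon(t-s)$ appearing in (iv): a direct estimate would yield the integral $\int_0^\infty e^{-\gamma\sigma} l_\varepsilon(\sigma)\,d\sigma$, which, by the same Lemma 3.10 of \cite{Santamaria2014} used in Theorem \ref{continuity_nonlinear_semigroup}, contributes an extra $|\log\tau(\varepsilon)|$. To obtain the clean rate $\tau(\varepsilon)$ asserted in \eqref{estimate_invariant_manifold}, this logarithmic loss must be absorbed, either by exploiting the strict uniform contraction of $\mathcal{F}_\varepsilon$ (which lets one reabsorb a multiplicative $|\log\tau(\varepsilon)|$ into the geometric series for $\rho$ small enough) or by refining (iv) using that $\tilde f$ takes values in a uniformly bounded subset of $L^2_{\Omega_0}$ up to an $O(\tau(\varepsilon))$ error, so the $l_\varepsilon$ singularity is not activated. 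A secondary subtlety is that $Y_\varepsilon\neq Y_0$ as subspaces of $X_\varepsilon^{1/2}$, so the supremum over $v\in Y_0$ in \eqref{estimate_invariant_manifold} is understood through the identification $v\mapsto Q_\varepsilon v$, whose error is itself of order $\tau(\varepsilon)$ and therefore does not worsen the final rate.
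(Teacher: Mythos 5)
Your construction is the standard Lyapunov--Perron scheme and coincides with the paper's argument in all essentials. The only structural difference is where the contraction lives: you contract on the space of backward-bounded trajectories $C_b((-\infty,0];X_\varepsilon^{1/2})$ for each fixed $y_0$ and then set $s_*^\varepsilon(y_0)=z^\varepsilon(0)$, whereas the paper contracts directly on the complete metric space $\Sigma_\varepsilon$ of candidate graph functions, via $\Phi_\varepsilon(s^\varepsilon)(\eta)=\int_{-\infty}^{\tau}e^{-A_\varepsilon^{-}(\tau-r)}G_\varepsilon\big(v^\varepsilon(r),s^\varepsilon(v^\varepsilon(r))\big)\,dr$ with $v^\varepsilon$ the backward solution driven by $s^\varepsilon$. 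The two formulations are equivalent; the paper's has the minor advantage that the Lipschitz property of $s_*^\varepsilon$ in $\eta$ and the comparison of $s_*^\varepsilon$ with $s_*^0$ are read off from the same Gronwall estimate. The invariance, exponential attraction and $\mathcal{A}_\varepsilon\subset\mathcal{M}_\varepsilon$ parts are treated as you indicate.

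The one genuine gap is the logarithm, and you located its source correctly: the term $\big(e^{-A_\varepsilon^-(\tau-r)}-e^{-A_0^-(\tau-r)}\big)G_0(v^0,s_*^0(v^0))$ must be integrated against $l_\varepsilon(\tau-r)e^{-\gamma(\tau-r)}$, which by Lemma 3.10 of \cite{Santamaria2014} yields $C\tau(\varepsilon)|\log\tau(\varepsilon)|$ with $\tau(\varepsilon)=(\|p_\varepsilon-p_0\|_{L^\infty(\Omega_1)}+\varepsilon)^{1/2}$. Neither of your proposed repairs removes it. A strict contraction cannot absorb a multiplicative logarithm: if the fixed-point inequality reads $|\!|\!|s_*^\varepsilon-s_*^0|\!|\!|\leq C\tau(\varepsilon)|\log\tau(\varepsilon)|+\kappa\,|\!|\!|s_*^\varepsilon-s_*^0|\!|\!|$ with $\kappa<1$, the conclusion is $C\tau(\varepsilon)|\log\tau(\varepsilon)|/(1-\kappa)$, still logarithmic. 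Your second suggestion (that the $l_\varepsilon$ singularity is ``not activated'') is not substantiated: $l_\varepsilon$ arises from the contour representation of the semigroup difference and is present for any fixed datum in $L^2_{\Omega_0}$. You should be aware that the paper's own Part 2 arrives at exactly $|\!|\!|s_*^\varepsilon-s_*^0|\!|\!|\leq C\tau(\varepsilon)|\log\tau(\varepsilon)|$, so the clean rate displayed in \eqref{estimate_invariant_manifold} is stronger than what the paper actually proves, and it is the logarithmic bound that is used in Section \ref{Rate of Convergence of Attractors}. Your argument, honestly stated, establishes the theorem with the rate $\tau(\varepsilon)|\log\tau(\varepsilon)|$ --- the same as the paper's proof --- and you should assert that rate rather than the clean one.
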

\begin{proof} The  proof is well known in the theory of invariant manifolds (see \cite{A.N.Carvalho2010} chapter 8). Here we just need to prove the estimate \eqref{estimate_invariant_manifold}.

For given $D>0$ and $0<\Delta$ we consider the set
$$
\Sigma_\varepsilon = \Big\{s^\varepsilon: Y_\varepsilon\to Z_\varepsilon\,;\, |\!|\!| s^\varepsilon|\!|\!|\leq D  \tn{ and }\|s^\varepsilon(v)-s^\varepsilon(\tilde{v})\|_{X_\varepsilon^{\frac{1}{2}}}\leq \Delta \|v-\tilde{v}\|_{X_\varepsilon^{\frac{1}{2}}}\Big\}.
$$
It is not difficult to see that $(\Sigma_\varepsilon, |\!|\!| \cdot |\!|\!|)$ is a complete metric space with the uniform convergence topology. We write the solution $u^\varepsilon$ of \eqref{semilinear_problem} as $u^\varepsilon=v^\varepsilon+z^\varepsilon$, with $v^\varepsilon\in Y_\varepsilon$ and $z^\varepsilon\in Z_\varepsilon$ and since $Q_\varepsilon$ and $I-Q_\varepsilon$ commute with $A_\varepsilon$,  we can write 
\begin{equation}\label{couple_system}
\begin{cases}
v_t^\varepsilon+A_\varepsilon^+ v^\varepsilon = Q_\varepsilon f(v^\varepsilon+z^\varepsilon):=H_\varepsilon(v^\varepsilon,z^\varepsilon)\\
z_t^\varepsilon+A_\varepsilon^- z^\varepsilon = (I-Q_\varepsilon)f(v^\varepsilon+z^\varepsilon):=G_\varepsilon(v^\varepsilon,z^\varepsilon).
\end{cases}
\end{equation}
By assumption there is a certain $\rho>0$ such that for all $ v^\varepsilon,\tilde{v}^\varepsilon\in Y_\varepsilon$ and $z^\varepsilon,\tilde{z}^\varepsilon\in Z_\varepsilon$,
\begin{itemize} 
\item[]$\ds\|H_\varepsilon( v^\varepsilon,z^\varepsilon)\|_{X_\varepsilon^\frac{1}{2}}\leq \rho,$ $\|G_\varepsilon( v^\varepsilon,z^\varepsilon)\|_{X_\varepsilon^\frac{1}{2}}\leq \rho,$
\item[]$\|H_\varepsilon( v^\varepsilon,z^\varepsilon)-H_\varepsilon( \tilde{v}^\varepsilon,\tilde{z}^\varepsilon)\|_{X_\varepsilon^\frac{1}{2}}\leq \rho (\|v^\varepsilon-\tilde{v}_\varepsilon\|_{X_\varepsilon^\frac{1}{2}} +\|z^\varepsilon-\tilde{z}_\varepsilon \|_{X_\varepsilon^\frac{1}{2}} ),$
\item[]$
\|G_\varepsilon( v^\varepsilon,z^\varepsilon)-G_\varepsilon( \tilde{v}^\varepsilon,\tilde{z}^\varepsilon)\|_{X_\varepsilon^\frac{1}{2}}\leq \rho (\|v^\varepsilon-\tilde{v}_\varepsilon\|_{X_\varepsilon^\frac{1}{2}} +\|z^\varepsilon-\tilde{z}_\varepsilon \|_{X_\varepsilon^\frac{1}{2}} ).$
\end{itemize}
Also, choosing $m$ suitably large and then $\varepsilon$ small we have that
 \begin{itemize}
\item[] $\ds\rho M\gamma^{-1}\leq D$, $0 < \gamma+\beta-\rho M(1+\Delta)$, $\ds\frac{\rho M^2(1+\Delta)}{\gamma+\beta-\rho M(1+\Delta)}\leq \Delta$,
\item[] $\ds\rho M \gamma^{-1}+\frac{\rho^2M^2(1+\Delta)\beta^{-1}}{\gamma+\beta-\rho M(1+\Delta)} \leq \frac{1}{2}$, $\ds L=\Big[\rho M+\frac{\rho^2 M^2(1+\Delta)(1+M)}{\gamma+\beta-\rho M(1+\Delta)} \Big]$, $\gamma-L> 0$.
\end{itemize}

\smallskip

We will divide the proof in three parts.

\textbf{Part 1}(Existence) Let $s^\varepsilon\in \Sigma_\varepsilon$ and $v^\varepsilon(t)=v^\varepsilon(t,\tau,\eta,s^\varepsilon)$ be the solution of 
$$
\begin{cases}
v_t^\varepsilon+A_\varepsilon^+ v^\varepsilon=H_\varepsilon(v^\varepsilon,s^\varepsilon(v^\varepsilon)),\quad  t<\tau \\
v^\varepsilon(\tau)=\eta.
 \end{cases}
$$
We define $\Phi_\varepsilon: \Sigma_\varepsilon\to\Sigma_\varepsilon$ by
$$
\Phi_\varepsilon(s^\varepsilon)(\eta)=\int_{-\infty}^{\tau} e^{-A_\varepsilon^- (\tau - r)}G_\varepsilon(v^\varepsilon(r),s^\varepsilon(v^\varepsilon(r)) )\,dr.
$$
Then 
$$ 
\|\Phi_\varepsilon(s^\varepsilon)(\eta)\|_{X_\varepsilon^{\frac{1}{2}}}\leq \rho M\int_{-\infty}^\tau  e^{-\gamma(\tau - r)} \,dr =\rho M\gamma^{-1}\leq D.
$$
For $s^\varepsilon, \tilde{s^\varepsilon}\in \Sigma_\varepsilon$, $\eta,\tilde{\eta}\in Y_\varepsilon$,  $v^\varepsilon(t)=v^\varepsilon(t,\tau,\eta,s^\varepsilon)$ and $\tilde{v}^\varepsilon(t)=\tilde{v}^\varepsilon(t,\tau,\tilde{\eta},\tilde{s}^\varepsilon)$ we have
\begin{multline*}
v^\varepsilon(t)-\tilde{v}^\varepsilon(t) = e^{-A_\varepsilon^+ (t-\tau)}(\eta-\tilde{\eta}) \\ +\int_{\tau}^t   e^{-A_\varepsilon^+ (t - r)}[H_\varepsilon(v^\varepsilon(r),s^\varepsilon(v^\varepsilon(r)) )-H_\varepsilon(\tilde{v}^\varepsilon(r),\tilde{s}^\varepsilon(\tilde{v}^\varepsilon(r)) )] \,dr.
\end{multline*}
Thus,
\begin{align*}
\|v^\varepsilon(t) & -\tilde{v}^\varepsilon(t)\|_{X_\varepsilon^{\frac{1}{2}}}  \leq Me^{\beta(t-\tau)}\|\eta-\tilde{\eta}\|_{X_\varepsilon^{\frac{1}{2}}} \\
 & + M \int_t^{\tau}  e^{\beta(t - r)}\|H_\varepsilon(v^\varepsilon(r),s^\varepsilon(v^\varepsilon(r)) )-H_\varepsilon(\tilde{v}^\varepsilon(r),\tilde{s}^\varepsilon(\tilde{v}^\varepsilon(r)) ) \|_{X_\varepsilon^{\frac{1}{2}}} \,dr \\
& \leq Me^{\beta(t-\tau)}\|\eta-\tilde{\eta}\|_{X_\varepsilon^{\frac{1}{2}}} \\
 & + \rho M \int_t^{\tau}  e^{\beta(t-r)}[\|v^\varepsilon(r)-\tilde{v}^\varepsilon(r) \|_{X_\varepsilon^{\frac{1}{2}}}+\| s^\varepsilon(v^\varepsilon(r)) -\tilde{s}^\varepsilon(\tilde{v}^\varepsilon(r)) \|_{X_\varepsilon^{\frac{1}{2}}}] \,dr\\
& \leq Me^{\beta(t-\tau)}\|\eta-\tilde{\eta}\|_{X_\varepsilon^{\frac{1}{2}}} \\
 & + \rho M \int_t^{\tau}  e^{\beta(t-r)}[(1+\Delta)\|v^\varepsilon(r)-\tilde{v}^\varepsilon(r) \|_{X_\varepsilon^{\frac{1}{2}}}+\|s^\varepsilon(\tilde{v}^\varepsilon(r)) -\tilde{s}^\varepsilon(\tilde{v}^\varepsilon(r)) \|_{X_\varepsilon^{\frac{1}{2}}}] \,dr \\
& \leq Me^{\beta(t-\tau)}\|\eta-\tilde{\eta}\|_{X_\varepsilon^{\frac{1}{2}}} \\ 
&+ \rho M(1+\Delta) \int_t^{\tau}  e^{\beta(t - r)}\|v^\varepsilon(r)-\tilde{v}^\varepsilon(r) \|_{X_\varepsilon^{\frac{1}{2}}} \,dr+ \rho M |\!|\!| s^\varepsilon-\tilde{s}^\varepsilon |\!|\!|\int_t^{\tau}  e^{\beta(t-r)} \,dr\\
& \leq Me^{\beta(t-\tau)}\|\eta-\tilde{\eta}\|_{X_\varepsilon^{\frac{1}{2}}} \\ 
&+ \rho M(1+\Delta) \int_t^{\tau}  e^{\beta(t - r)}\|v^\varepsilon(r)-\tilde{v}^\varepsilon(r) \|_{X_\varepsilon^{\frac{1}{2}}} \,dr+ \rho M\beta^{-1} |\!|\!| s^\varepsilon-\tilde{s}^\varepsilon |\!|\!| e^{\beta(t-\tau)}.
\end{align*} 
By Gronwall's inequality, 
$$
\|v^\varepsilon(t)-\tilde{v}^\varepsilon(t)\|_{X_\varepsilon^{\frac{1}{2}}}\leq \Big[M\|\eta-\tilde{\eta}\|_{X_\varepsilon^\frac{1}{2}}+\rho M \beta^{-1}|\!|\!| s^\varepsilon-\tilde{s}^\varepsilon |\!|\!|  \Big]  e^{[\rho M(1+\Delta)-\beta](\tau-t)}.
$$ 
Thus
\begin{align*}
\|\Phi_\varepsilon(s^\varepsilon)(\eta) -\Phi_\varepsilon(\tilde{s}^\varepsilon)(\tilde{\eta})\|_{X_\varepsilon^{\frac{1}{2}}}& \leq \int_{-\infty}^{\tau} \| e^{-A_\varepsilon^- (\tau - r)}[G_\varepsilon(v^\varepsilon(r),s^\varepsilon(v^\varepsilon(r)))-G_\varepsilon(\tilde{v}^\varepsilon(r),\tilde{s}^\varepsilon(\tilde{v}^\varepsilon(r)))] \|_{X_\varepsilon^\frac{1}{2}}\,dr \\
&\leq \frac{\rho M^2(1+\Delta)}{\gamma+\beta-\rho M(1+\Delta)}\|\eta-\tilde{\eta}\|_{X_\varepsilon^\frac{1}{2}}+\frac{\rho^2M^2(1+\Delta)\beta^{-1}}{\gamma+\beta-\rho M(1+\Delta)}|\!|\!| s^\varepsilon-\tilde{s}^\varepsilon |\!|\!|\\
&\leq \Delta\|\eta-\tilde{\eta}\|_{X_\varepsilon^\frac{1}{2}}+\frac12|\!|\!| s^\varepsilon-\tilde{s}^\varepsilon |\!|\!|.
\end{align*}
Therefore, making $s^\varepsilon={\tilde s}^\varepsilon$ we have that $\Phi_\varepsilon$ takes $\Sigma_\varepsilon$ into itself and, making $\eta=\tilde\eta$ and taking the suppremum in $\eta$ we have that $\Phi_\varepsilon$ is a contraction on $\Sigma_\varepsilon$, hence there is a unique  $s_\ast^\varepsilon \in \Sigma_\varepsilon$ which is a fixed point of $\Phi_\varepsilon$.

Now, let $(\bar{v}^\varepsilon,\bar{z}^\varepsilon)\in \mathcal{M}_\varepsilon$, $\bar{z}^\varepsilon=s_\ast^\varepsilon(\bar{v}^\varepsilon)$ and let $v_{s_\ast}^\varepsilon(t)$ be the solution of 
$$
\begin{cases}
v_t^\varepsilon+A_\varepsilon^+ v^\varepsilon=H_\varepsilon(v^\varepsilon,s_*^\varepsilon(v^\varepsilon)),\quad  t<\tau \\
v^\varepsilon(0)=\bar{v}^\varepsilon.
 \end{cases}
$$ 
Thus, $\{(v_{s_*}^\varepsilon(t), s_*^\varepsilon(v_{s_*}^\varepsilon(t))\}_{t\in\R}$ defines a curve on $\mathcal{M}_\varepsilon$. But the only solution of equation 
$$
z_t^\varepsilon+A_\varepsilon^- z^\varepsilon=G_\varepsilon(v_{s_*}^\varepsilon(t),s_*^\varepsilon(v_{s_*}^\varepsilon(t)))
$$ 
which stays bounded when $t\to-\infty$ is given by
$$
z_{s_*}^\varepsilon=\int_{-\infty}^t e^{-A_\varepsilon^- (t-r)}G_\varepsilon(v_{s_*}^\varepsilon(t),s_*^\varepsilon(v_{s_*}^\varepsilon(t)))\,dr = s_*^\varepsilon(v_{s_*}^\varepsilon(t)).
$$
Therefore $(v_{s_*}^\varepsilon(t), s_*^\varepsilon(v_{s_*}^\varepsilon(t))$ is a solution of \eqref{couple_system} through $(\bar{v}^\varepsilon,\bar{z}^\varepsilon)$ and thus $\mathcal{M}_\varepsilon$ is a invariant manifold for \eqref{semilinear_problem}.

\textbf{Part 2}(Estimate) Now we will prove the estimate \eqref{estimate_invariant_manifold}. For $\eta \in Y_0$,  we have
\begin{align*}
\|s^\varepsilon_*(\eta)-s^0_*(\eta)\|_{X_\varepsilon^\frac{1}{2}}& \leq \int_{-\infty}^\tau \|e^{-A_\varepsilon^-(\tau-r)}G_\varepsilon(v^\varepsilon,s_*^\varepsilon(v^\varepsilon))-e^{-A_0^-(\tau-r)}G_0(v^0,s_*^0(v^0))\|_{X_\varepsilon^\frac{1}{2}}\,dr\\
& \leq \int_{-\infty}^\tau \|e^{-A_\varepsilon^-(\tau-r)}G_\varepsilon(v^\varepsilon,s_*^\varepsilon(v^\varepsilon))-e^{-A_\varepsilon^-(\tau-r)}G_\varepsilon(v^0,s_*^0(v^0))\|_{X_\varepsilon^\frac{1}{2}}\,dr\\
& + \int_{-\infty}^\tau \|e^{-A_\varepsilon^-(\tau-r)}G_\varepsilon(v^0,s_*^0(v^0))-e^{-A_\varepsilon^-(\tau-r)}G_0(v^0,s_*^0(v^0))\|_{X_\varepsilon^\frac{1}{2}}\,dr\\
& + \int_{-\infty}^\tau \|e^{-A_\varepsilon^-(\tau-r)}G_0(v^0,s_*^0(v^0))-e^{-A_0^-(\tau-r)}G_0(v^0,s_*^0(v^0))\|_{X_\varepsilon^\frac{1}{2}}\,dr.
\end{align*}
If we denote the last three integrals for $I_1$, $I_2$ and $I_3$ respectively, we have
\begin{align*}
I_1&\leq \rho M \int_{-\infty}^\tau e^{-\gamma (\tau - r)} [(1+\Delta)\|v^\varepsilon-v^0\|_{X_\varepsilon^\frac{1}{2}}+|\!|\!|s_*^\varepsilon-s_*^0|\!|\!|]\,dr\\
&\leq \rho M(1+\Delta)\int_{-\infty}^\tau e^{-\gamma (\tau - r)}\|v^\varepsilon-v^0\|_{X_\varepsilon^\frac{1}{2}}\,dr +\rho M |\!|\!|s_*^\varepsilon-s_*^0|\!|\!|\int_{-\infty}^\tau e^{-\gamma (\tau - r)}\,dr\\
&= \rho M \gamma^{-1}|\!|\!|s_*^\varepsilon-s_*^0|\!|\!|+\rho M(1+\Delta)\int_{-\infty}^\tau e^{-\gamma (\tau - r)}\|v^\varepsilon-v^0\|_{X_\varepsilon^\frac{1}{2}}\,dr.
\end{align*}
For $I_2$ we have 
$$
G_\varepsilon(v^0,s_*^0(v^0))-G_0(v^0,s_*^0(v^0))=(Q_\varepsilon-Q_0)f(v^0+s_*^0(v^0)),
$$ 
and if we denote $\tau(\varepsilon)=(\|p_\varepsilon-p_0\|_{L^\infty(\Omega_1)}+\varepsilon)^\frac{1}{2}$, then $I_2\leq C\tau(\varepsilon)$. 
And for $I_3$, we have
$$
I_3\leq \int_{-\infty}^\tau l_\varepsilon(\tau-r)e^{-\gamma(\tau-r)}\,dr\leq C\tau(\varepsilon)|\log(\tau(\varepsilon))|,
$$
where we have used the Lemma 3.10 in \cite{Santamaria2017}. Thus
\begin{multline*}
\|s^\varepsilon_*(\eta)-s^0_*(\eta)\|_{X_\varepsilon^{\frac{1}{2}}} \leq C\tau(\varepsilon)|\log(\tau(\varepsilon))| \\ +\rho M \gamma^{-1} |\!|\!|s_*^\varepsilon-s^0_*|\!|\!| + \rho M(1+\Delta)\int_{-\infty}^\tau  e^{-\gamma(\tau-r)}\|v^\varepsilon-v^0\|_{X_\varepsilon^{\frac{1}{2}} }\,dr.
\end{multline*}
But, 

\begin{align*}
\|v^\varepsilon(t)-v^0(t)\|_{X_\varepsilon^{\frac{1}{2}}} & \leq \|(e^{-A_\varepsilon^+ (t-\tau)}-e^{-A_0^+ (t-\tau)})\eta \| \\
&+ \int_t^\tau \|e^{-A_\varepsilon^+ (t-r)}H_\varepsilon(v^\varepsilon,s_*^\varepsilon(v^\varepsilon))-e^{-A_\varepsilon^+ (t-r)}H_\varepsilon(v^0,s_*^0(v^0))\|_{X_\varepsilon^{\frac{1}{2}}}\,dr\\
&+ \int_t^\tau \|e^{-A_\varepsilon^+ (t-r)}H_\varepsilon(v^0,s_*^0(v^0))-e^{-A_\varepsilon^+ (t-r)}H_0(v^0,s_*^0(v^0))  \|_{X_\varepsilon^{\frac{1}{2}}}\,dr\\
&+ \int_t^\tau \|e^{-A_\varepsilon^+ (t-r)}H_0(v^0,s_*^0(v^0))-e^{-A_0^+ (t-r)}H_0(v^0,s_*^0(v^0))\|_{X_\varepsilon^{\frac{1}{2}}}\,dr
\end{align*}
With the same argument used earlier, we have 
\begin{multline*}
\|v^\varepsilon(t)-v^0(t)\|_{X_\varepsilon^{\frac{1}{2}}} \leq  C \int_{t}^\tau l_\epsilon(r-t) e^{\beta(t-r)}\,dr \\ +\rho M|\!|\!|s_*^\varepsilon-s^0_*|\!|\!|\int_t^\tau e^{\beta(t-r)} \,dr + \rho M(1+\Delta)\int_{t}^\tau  e^{\beta(t-r)}\|v^\varepsilon-v^0\|_{X_\varepsilon^{\frac{1}{2}} }\,dr.
\end{multline*}
By Gronwall's inequality,
$$
\|v^\varepsilon(t)-v^0(t)\|_{X_\varepsilon^{\frac{1}{2}}} \leq [C\tau(\varepsilon)|\log(\tau(\varepsilon))|+\rho M \beta^{-1}|\!|\!|s_*^\varepsilon-s^0_*|\!|\!|]e^{[\rho M(1+\Delta)-\beta](\tau-t)},
$$
thus
\begin{align*}
&\|s_*^\varepsilon(\eta)-s^0_*(\eta)\|_{X_\varepsilon^\frac{1}{2}}\leq  C\tau(\varepsilon)|\log(\tau(\varepsilon))| +\rho M \gamma^{-1} |\!|\!|s_*^\varepsilon-s^0_*|\!|\!|  \\
& + \rho M(1+\Delta)\int_{-\infty}^\tau e^{-\gamma(\tau-r)} e^{[\rho M(1+\Delta)-\beta](\tau - r)}\,dr[C\tau(\varepsilon)|\log(\tau(\varepsilon))|+\rho M \beta^{-1}|\!|\!|s_*^\varepsilon-s^0_*|\!|\!|] \\
& \leq C\tau(\varepsilon)|\log(\tau(\varepsilon))| +\Big[\rho M \gamma^{-1}+\frac{\rho^2 M^2(1+\Delta)\beta^{-1}}{\gamma+\beta-\rho M(1+\Delta)}\Big]|\!|\!|s_*^\varepsilon-s^0_*|\!|\!|
\end{align*}
which implies
$
|\!|\!|s_*^\varepsilon-s^0_*|\!|\!|\leq C\tau(\varepsilon)|\log(\tau(\varepsilon))|.
$

\textbf{Part 3}(Exponential attraction) It remains shown that $\mathcal{M}_\varepsilon$ is exponentially attracting and $\mathcal{A}_\varepsilon\subset\mathcal{M}_\varepsilon$. Let $(v^\varepsilon,z^\varepsilon)\in Y_\varepsilon\oplus Z_\varepsilon$ be the solution of \eqref{couple_system} and define $\xi^\varepsilon(t)=z^\varepsilon-s_*^\varepsilon(v^\varepsilon(t))$ and consider $y^\varepsilon(r,t), r\leq t$, $t\geq 0$, the solution through $r$ of 
$$
\begin{cases}
y_{r}^{\varepsilon}+A_\varepsilon^+ y^\varepsilon=H_\varepsilon(y^\varepsilon,s_*^\varepsilon(y^\varepsilon)),\quad  r\leq t \\
y^\varepsilon(t,t)=v^\varepsilon(t).
\end{cases}
$$
Thus,
\begin{align*}
\|y^\varepsilon(r,t)-&v^\varepsilon(r)\|_{X_\varepsilon^{\frac{1}{2}}} \\
& =\Big\|\int_t^r e^{-A_\varepsilon^+(r-\theta)}[H_\varepsilon(y^\varepsilon(\theta,t),s_*^\varepsilon(y^\varepsilon(\theta,t)))-H_\varepsilon(v^\varepsilon(\theta),z^\varepsilon
(\theta))]\,d\theta\Big\|_{X_\varepsilon^{\frac{1}{2}}} \\
&\leq \rho M \int_r^t e^{\beta(r-\theta)}[(1+\Delta)\|y^\varepsilon(\theta,t)-v^\varepsilon(\theta)\|_{X_\varepsilon^{\frac{1}{2}}}+\|\xi^\varepsilon(\theta)\|_{X_\varepsilon^{\frac{1}{2}}}]\,d\theta.
\end{align*}
By Gronwall's inequality
\begin{equation}\label{est-v-y}
\|y^\varepsilon(r,t)-v^\varepsilon(r)\|_{X_\varepsilon^{\frac{1}{2}}}\leq\rho M \int_r^{t} e^{-(\beta-\rho M(1+\Delta))(\theta-r)} \|\xi^\varepsilon(\theta)\|_{X_\varepsilon^{\frac{1}{2}}}\,d\theta\quad r\leq t.
\end{equation}
Now we take $t_0\in[r,t]$ and then
\begin{align*}
\|y^\varepsilon(r,t) &-y^\varepsilon(r,t_0)\|_{X_\varepsilon^{\frac{1}{2}}}\\ 
& = \|e^{-A_\varepsilon^+ (r-t_0)}[y(t_0,t)-v^\varepsilon(t_0)]\|_{X_\varepsilon^{\frac{1}{2}}} \\
& +\Big\| \int_{t_0}^r e^{-A_\varepsilon^+(r-\theta)}[H_\varepsilon(y^\varepsilon(\theta,t),s_*^\varepsilon(y^\varepsilon(\theta,t)))-H_\varepsilon(y^\varepsilon(\theta,t_0),s_*^\varepsilon(y^\varepsilon(\theta,t_0)))]\, d\theta\Big\|_{X_\varepsilon^{\frac{1}{2}}} \\
&\leq \rho M^2 e^{\beta (r-t_0)} \int_{t_0}^{t} e^{-(\beta-\rho M(1+\Delta))(\theta-t_0)}\|\xi^\varepsilon(\theta)\|_{X_\varepsilon^{\frac{1}{2}}}\,d\theta\\
&+\rho M\int_{r}^{t_0} e^{\beta(r-\theta)}(1+\Delta)\| y^\varepsilon(\theta,t)-y^\varepsilon(\theta,t_0)\|_{X_\varepsilon^{\frac{1}{2}}}\,d\theta. 
\end{align*}
By Gronwall's inequality
\begin{equation}\label{est-y-y}
\|y^\varepsilon(r,t)-y^\varepsilon(r,t_0)\|_{X_\varepsilon^{\frac{1}{2}}} \leq \rho M^2 \int_{t_0}^{t} e^{-(\beta-\rho M(1+\Delta))(\theta-r)} \|\xi^\varepsilon(\theta)\|_{X_\varepsilon^\frac{1}{2}}\,d\theta.
\end{equation}
In what follows we estimate $\xi^\varepsilon(t)$. Since
$$
z^\varepsilon(t)=e^{-A_\varepsilon^- (t-t_0)}z^\varepsilon(t_0)+\int_{t_0}^t e^{-A_\varepsilon^- (t-r)}G_\varepsilon(v^\varepsilon(r),z^\varepsilon(r))\,dr,
$$
we have
\begin{align*}
\xi^\varepsilon(t)- & e^{-A_\varepsilon^- (t-t_0)}\xi^\varepsilon(t_0) = z^\varepsilon(t)-s_*^\varepsilon(v^\varepsilon(t))- e^{-A_\varepsilon^- (t-t_0)}[z^\varepsilon(t_0)-s_*^\varepsilon(v^\varepsilon(t_0))] \\
&=\int_{t_0}^t e^{-A_\varepsilon^- (t-r)}G_\varepsilon(v^\varepsilon(r),z^\varepsilon(r))\,dr-s_*^\varepsilon(v^\varepsilon(t)) +e^{-A_\varepsilon^- (t-t_0)}s_*^\varepsilon(v^\varepsilon(t_0))\\
& =\int_{t_0}^t e^{-A_\varepsilon^- (t-r)}G_\varepsilon(v^\varepsilon(r),z^\varepsilon(r))\,dr -\int_{-\infty}^t e^{-A_\varepsilon^- (t-r)}G_\varepsilon(y^\varepsilon(r,t),s_*^\varepsilon(y^\varepsilon(r,t)))\,dr\\
& +e^{-A_\varepsilon^- (t-t_0)}\int_{-\infty}^{t_0} e^{-A_\varepsilon^- (t_0-r)}G_\varepsilon(y^\varepsilon(r,t_0),s_*^\varepsilon(y^\varepsilon(r,t_0)))\,dr\\
& =\int_{t_0}^t e^{-A_\varepsilon^- (t-r)}[G_\varepsilon(v^\varepsilon(r),z^\varepsilon(r))-G_\varepsilon(y^\varepsilon(r,t),s_*^\varepsilon(y^\varepsilon(r,t))) ]\,dr\\
& - \int_{-\infty}^{t_0} e^{-A_\varepsilon^- (t-r)}[ G_\varepsilon(y^\varepsilon(r,t),s_*^\varepsilon(y^\varepsilon(r,t)))  -G_\varepsilon(y^\varepsilon(r,t_0),s_*^\varepsilon(y^\varepsilon(r,t_0)))]\,dr.
\end{align*}
Thus, using \eqref{est-v-y} and \eqref{est-y-y},
\begin{align*}
\|&\xi^\varepsilon(t) -  e^{-A_\varepsilon^- (t-t_0)}\xi^\varepsilon(t_0) \|_{X_\varepsilon^{\frac{1}{2}}} \\
&\leq \rho M \int_{t_0}^t e^{-\gamma (t-r)}[\|v^\varepsilon(r)-y^\varepsilon(r,t)\|_{X_\varepsilon^{\frac{1}{2}}}+\|z^\varepsilon(r)-s_*^\varepsilon(y^\varepsilon(r,t))\|_{X_\varepsilon^{\frac{1}{2}}}]\,dr\\
& + \rho M(1+\Delta)\int_{-\infty}^{t_0}e^{-\gamma (t-r)}\|y^\varepsilon(r,t)-y^\varepsilon(r,t_0)\|_{X_\varepsilon^{\frac{1}{2}}}\,dr\\
&\leq \rho M \int_{t_0}^t e^{-\gamma (t-r)}\|\xi^\varepsilon(r)\|_{X_\varepsilon^{\frac{1}{2}}}\,dr\\
& +\rho^2 M^2(1+\Delta)e^{-\gamma t}\int_{t_0}^t e^{-(\beta-\rho M(1+\Delta))\theta} \|\xi^\varepsilon(\theta)\|_{X_\varepsilon^{\frac{1}{2}}}\int_{-\infty}^{\theta} e^{(\gamma+\beta-\rho M(1+\Delta) )r}\,dr d\theta  \\
& +\rho^2 M^3(1+\Delta)e^{-\gamma t}\int_{t_0}^t e^{-(\beta -\rho M(1+\Delta))\theta} \|\xi^\varepsilon(\theta)\|_{X_\varepsilon^{\frac{1}{2}}} \int_{-\infty}^{t_0} e^{(\gamma+\beta-\rho M(1+\Delta) )r}\,dr d\theta, 
\end{align*}
and then
\begin{align*}
\|\xi^\varepsilon(t) & -  e^{-A_\varepsilon^- (t-t_0)}\xi^\varepsilon(t_0) \|_{X_\varepsilon^{\frac{1}{2}}} \leq [\rho M-\frac{\rho^2 M^2(1+\Delta)}{\gamma+\beta -\rho M(1+\Delta) }\Big]\int_{t_0}^t e^{-\gamma(t-\theta)}\|\xi^\varepsilon(\theta)\|_{X_\varepsilon^{\frac{1}{2}}} \,d\theta\\
& + \frac{\rho^2M^3(1+\Delta) e^{-\gamma (t-t_0)}}{\gamma+\beta-\rho M (1+\Delta)}\int_{t_0}^t e^{-(\beta-\rho M(1+\Delta){\color{red})}(\theta-t_0)}\|\xi^\varepsilon(\theta)\|_{X_\varepsilon^{\frac{1}{2}}}\,d\theta.
\end{align*}
Hence
\begin{align*}
e^{\gamma (t-t_0)} \|\xi^\varepsilon(t)\|_{X_\varepsilon^{\frac{1}{2}}} & \leq M \|\xi^\varepsilon(t_0)\|_{X_\varepsilon^{\frac{1}{2}}}+ \Big[\rho M+\frac{\rho^2 M^2(1+\Delta)}{\gamma+\beta-\rho M(1+\Delta) }\Big]\int_{t_0}^t e^{\gamma(r-t_0)}\|\xi^\varepsilon(r)\|_{X_\varepsilon^{\frac{1}{2}}} \,dr\\
& +\frac{\rho^2M^3(1+\Delta)}{\gamma-\beta-\rho M (1+\Delta)}\int_{t_0}^t e^{-(\gamma+\beta-\rho M(1+\Delta){\color{red})}(\theta-t_0)}e^{\beta (\theta-t_0)}\|\xi^\varepsilon(\theta)\|_{X_\varepsilon^{\frac{1}{2}}}\,d\theta\\
&\leq M \|\xi^\varepsilon(t_0)\|_{X_\varepsilon^{\frac{1}{2}}}+\Big[\rho M+\frac{\rho^2 M^2(1+\Delta)(1+M)}{\gamma+\beta-\rho M(1+\Delta)}\Big]\int_{t_0}^t e^{\gamma(r-t_0)}\|\xi^\varepsilon(r)\|_{X_\varepsilon^{\frac{1}{2}}}\,dr.
\end{align*}
By Gronwall's inequality
$$
\|\xi^\varepsilon(t)\|_{X_\varepsilon^{\frac{1}{2}}}\leq M\|\xi^\varepsilon(t_0)\|_{X_\varepsilon^{\frac{1}{2}}}e^{-(\gamma-L)(t-t_0)},
$$
and then
$$
\|z^\varepsilon(t)-s_*^\varepsilon(v^\varepsilon(t))\|_{X_\varepsilon^{\frac{1}{2}}}=\|\xi^\varepsilon(t)\|_{X_\varepsilon^{\frac{1}{2}}}\leq M\|\xi^\varepsilon(t_0)\|_{X_\varepsilon^{\frac{1}{2}}}e^{-(\gamma-L)(t-t_0)}.
$$

Now if $u^\varepsilon:=T_\varepsilon(t)u_0^\varepsilon=v^\varepsilon(t)+z^\varepsilon(t)$, $t\in\R$, denotes the solution through at $u_0^\varepsilon=v_0^\varepsilon+z_0^\varepsilon\in \mathcal{A}_\varepsilon$, then
$$
\|z^\varepsilon(t)-s_*^\varepsilon(v^\varepsilon(t))\|_{X_\varepsilon^{\frac{1}{2}}}\leq M\|z_0^\varepsilon-s_*^\varepsilon(v_0^\varepsilon)\|_{X_\varepsilon^{\frac{1}{2}}}e^{-(\gamma-L)(t-t_0)}.
$$
Since $\{T_\varepsilon(t)u_0^\varepsilon\,;\,t\in\R\}\subset\mathcal{A}_\varepsilon$ is bounded, letting $t_0\to-\infty$ we obtain $T_\varepsilon(t)u_0^\varepsilon=v^\varepsilon(t)+s_*^\varepsilon(v^\varepsilon(t))\in\mathcal{M}_\varepsilon$. That is $\mathcal{A}_\varepsilon\subset\mathcal{M}_\varepsilon$. Moreover, if $B_\varepsilon\subset X_\varepsilon^\frac{1}{2}$ is a bounded set and  $u_0^\varepsilon=v_0^\varepsilon+z_0^\varepsilon\in B_\varepsilon$, we conclude that $T_\varepsilon(t)u_0^\varepsilon=v^\varepsilon(t)+z^\varepsilon(t)$ satisfies
\begin{align*}
\sup_{u_0^\varepsilon\in B_\varepsilon}\inf_{w\in\mathcal{M}_\varepsilon}\|T_\varepsilon(t)u_0^\varepsilon-w\|_{X_\varepsilon^\frac{1}{2}}&\leq \sup_{u_0^\varepsilon\in B_\varepsilon}\|z^\varepsilon(t)-s_*^\varepsilon(v^\varepsilon(t))\|_{X_\varepsilon^\frac{1}{2}}\\
&\leq Me^{-(\gamma-L)(t-t_0)} \sup_{u_0^\varepsilon\in B_\varepsilon}\|z_0^\varepsilon-s_*^\varepsilon(v_0^\varepsilon)\|_{X_\varepsilon^{\frac{1}{2}}},
\end{align*}
which implies
$$
\tn{dist}_H(T_\varepsilon(t)B_\varepsilon,\mathcal{M}_\varepsilon)\leq C(B_\varepsilon)e^{-(\gamma-L)(t-t_0)},
$$
and thus the proof is complete.
\end{proof}

\begin{rem}\label{C_convergence}
It is well known in the theory of invariant manifolds the $C^0$, $C^1$ and $C^{1,\theta}$ convergences of invariant manifolds (see \cite{Carbone2006} and \cite{Santamaria2017}). That is  
$$
\|s_\ast^\varepsilon-s_\ast^0\|_{C^0(Y_0)}, \|s_\ast^\varepsilon-s_\ast^0\|_{C^1(Y_0)}, \|s_\ast^\varepsilon-s_\ast^0\|_{C^{1,\theta}(Y_0)}\overset{\varepsilon\to 0} \longrightarrow 0.
$$
\end{rem}

\section{Rate of Convergence of Attractors}\label{Rate of Convergence of Attractors}
In this section we will estimate the continuity of attractors of \eqref{semilinear_problem} in the Hausdorff metric by the rate of convergence of resolvent operators obtained in the Section \ref{Rate of Convergence of Eigenvalues and Equilibria}.

The operator $A_\varepsilon$, $\varepsilon\in[0,\varepsilon_0]$, has compact resolvent and according to \cite{Carbone2006}, $A_0$ is Sturm-Liouville type, which implies transversality of stable and unstable manifolds of the equilibrium points. Since we assume hyperbolicity, the limiting problem \eqref{limite_problem} generates a Morse-Smale semigroup in $X_0^\frac{1}{2}$ and hence the perturbed problem \eqref{perturbed_problem} generates a Morse-Smale semigroup in $X_\varepsilon^\frac{1}{2}$. 

We saw in the last section how the gap condition implies the existence of the finite dimensional invariant manifold $\mathcal{M}_\varepsilon$ for \eqref{semilinear_problem}. The invariant manifold contains the attractor $\mathcal{A}_\varepsilon$ and the flow is given by an ordinary differential equation. That is,
$$
u^\varepsilon(t)=v^\varepsilon(t)+s_\ast^\varepsilon(v^\varepsilon(t)), \quad t\in\R,
$$ 
where $v^\varepsilon(t)$ satisfies
\begin{equation}\label{eq_edo_reduced}
\dot{v^\varepsilon}+A_\varepsilon^+v^\varepsilon=Q_\varepsilon f(v^\varepsilon+ s_\ast^\varepsilon(v^\varepsilon(t))),
\end{equation}
and we can consider $v^\varepsilon\in \R^m$ and $H_\varepsilon(v^\varepsilon)=Q_\varepsilon f(v^\varepsilon+ s_\ast^\varepsilon(v^\varepsilon(t)))$ a continuously differentiable map in $\R^m$. For each $\varepsilon\in [0,\varepsilon_0]$, we denote $\tilde{T}_\varepsilon=\tilde{T}_\varepsilon(1)$, where $\tilde{T}_\varepsilon(\cdot)$ is the semigroup generated by solution $ v^\varepsilon(\cdot)$  of \eqref{eq_edo_reduced} in $\R^m$. We have the following convergences
\begin{equation}\label{estimate_one_map}
\|\tilde{T}_\varepsilon-\tilde{T}_0\|_{C^1(\R^m,\R^m)}\overset{\varepsilon\to 0}\longrightarrow 0\quad\tn{and}\quad\|\tilde{T}_\varepsilon-\tilde{T}_0\|_{L^\infty(\R^m,\R^m)}\leq C\tau(\varepsilon)|\log(\tau(\varepsilon))|,
\end{equation}
where $\tau(\varepsilon)=(\|p_\varepsilon-p_0\|_{L^\infty(\Omega_1)}+\varepsilon)^\frac{1}{2}$ and the last estimate is proved as Theorem \ref{continuity_nonlinear_semigroup}. 

Since we have a Morse-Smale semigroup in $\R^m$, by using techniques of shadowing in \cite{Pilyugun1999}, we have the following result that was proved in \cite{Santamaria2017}.

\begin{theo}[\cite{Santamaria2017}]\label{Shadowing}
Let $T:\R^m\to \R^m$ be a discrete Morse-Smale semigroup with a global attractor $\mathcal{A}$. Then there are a positive constant $L$, a neighborhood $\mathcal{N}(\mathcal{A})$ of $\mathcal{A}$ and a neighborhood $\mathcal{N}(T)$ of $T$ in the $C^1(\mathcal{N}(\mathcal{A}),\R^m)$ topology such that, for any $T_1,T_2\in \mathcal{N}(T)$ with attractors  $\mathcal{A}_1$, $\mathcal{A}_2$ respectively, we have
$$
\tn{dist}_H(\mathcal{A}_1,\mathcal{A}_2)\leq L\|T_1-T_2\|_{L^\infty(\mathcal{N}(\mathcal{A}),\R^m)}.
$$
\end{theo}

Now we are ready to prove the main result of this paper.

\begin{theo} Let $\mathcal{A}_\varepsilon$, $\varepsilon\in[0,\varepsilon_0]$, be the attractor for \eqref{semilinear_problem}. Then there is a positive constant $C$ independent of $\varepsilon$ such that
$$
\tn{d}_H(\mathcal{A}_\varepsilon,\mathcal{A}_0)\leq C(\|p_\varepsilon-p_0\|_{L^\infty(\Omega_1)}+\varepsilon)^\frac{1}{2}|\log(\|p_\varepsilon-p_0\|_{L^\infty(\Omega_1)}+\varepsilon)|.
$$
\end{theo}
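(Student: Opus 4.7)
The plan is to reduce the infinite-dimensional comparison of attractors to a finite-dimensional one via the invariant manifolds $\mathcal{M}_\varepsilon$, and then invoke the shadowing statement of Proposition \ref{Shadowing}. Since $\mathcal{A}_\varepsilon \subset \mathcal{M}_\varepsilon$ and the flow on $\mathcal{M}_\varepsilon$ is conjugate to the ODE $\dot v^\varepsilon + A_\varepsilon^+ v^\varepsilon = Q_\varepsilon f(v^\varepsilon + s_\ast^\varepsilon(v^\varepsilon))$ on $Y_\varepsilon \cong \mathbb{R}^m$, the projected attractor $\tilde{\mathcal{A}}_\varepsilon := Q_\varepsilon \mathcal{A}_\varepsilon$ is the global attractor of the time-one map $\tilde T_\varepsilon$ on $\mathbb{R}^m$. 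Because $T_0(\cdot)$ is Morse--Smale and the reduction is a $C^1$-conjugacy, $\tilde T_0$ is a discrete Morse--Smale map and Proposition \ref{Shadowing} applies directly to it.

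The first half of \eqref{estimate_one_map} guarantees that, for $\varepsilon$ small enough, $\tilde T_\varepsilon$ belongs to the $C^1$-neighborhood $\Theta$ of $\tilde T_0$ produced by Proposition \ref{Shadowing}. Applying the proposition with $T = \tilde T_0$, $T_1 = \tilde T_\varepsilon$, $T_2 = \tilde T_0$, together with the second half of \eqref{estimate_one_map}, yields
$$
\tn{dist}_H(\tilde{\mathcal{A}}_\varepsilon, \tilde{\mathcal{A}}_0) \leq L\|\tilde T_\varepsilon - \tilde T_0\|_{L^\infty(\mathcal{N}(\tilde{\mathcal{A}}_0),\mathbb{R}^m)} \leq C\,\tau(\varepsilon)|\log\tau(\varepsilon)|,
$$
where $\tau(\varepsilon)=(\|p_\varepsilon-p_0\|_{L^\infty(\Omega_1)}+\varepsilon)^{1/2}$.

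Next, I would lift this estimate back to $X_\varepsilon^{1/2}$. Any $u^\varepsilon \in \mathcal{A}_\varepsilon$ can be written as $u^\varepsilon = v^\varepsilon + s_\ast^\varepsilon(v^\varepsilon)$ with $v^\varepsilon \in \tilde{\mathcal{A}}_\varepsilon$. Choose $\bar v^0 \in \tilde{\mathcal{A}}_0$ almost realizing the infimum distance from $v^\varepsilon$ and set $w^0 = \bar v^0 + s_\ast^0(\bar v^0) \in \mathcal{A}_0$. Using the uniform Lipschitz bound on $s_\ast^\varepsilon$ (the constant $\Delta$ built into the contraction space $\Sigma_\varepsilon$),
$$
\|u^\varepsilon - w^0\|_{X_\varepsilon^{1/2}} \leq (1+\Delta)\|v^\varepsilon - \bar v^0\|_{X_\varepsilon^{1/2}} + |\!|\!|s_\ast^\varepsilon - s_\ast^0|\!|\!|.
$$
Both terms on the right are bounded by $C\,\tau(\varepsilon)|\log\tau(\varepsilon)|$, the first by the shadowing step and the second by \eqref{estimate_invariant_manifold} (whose proof actually yields the logarithmic factor). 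The symmetric argument bounds $\tn{dist}_H(\mathcal{A}_0,\mathcal{A}_\varepsilon)$, and since $|\log\tau(\varepsilon)| = \tfrac{1}{2}|\log(\|p_\varepsilon-p_0\|_{L^\infty(\Omega_1)}+\varepsilon)|$, the stated bound on $\tn{d}_H(\mathcal{A}_\varepsilon,\mathcal{A}_0)$ follows.

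The principal technical obstacle is uniformity in $\varepsilon$: the constant $L$ and neighborhood $\Theta$ in Proposition \ref{Shadowing} depend only on $\tilde T_0$, so one must verify that the reduced perturbed maps $\tilde T_\varepsilon$ sit in $\Theta$ for all small $\varepsilon$. This hinges on the $C^1$-convergence in \eqref{estimate_one_map} and on the identification of the finite-dimensional spaces $Y_\varepsilon$ with a common $\mathbb{R}^m$ through $Q_\varepsilon \to Q_0$. A secondary nuisance is keeping track of the logarithmic factors: both the shadowing estimate and the invariant manifold estimate produce a factor $\tau(\varepsilon)|\log\tau(\varepsilon)|$, but being of the same order, their sum remains of that order and no loss is incurred.
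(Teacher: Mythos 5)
Your proposal is correct and follows essentially the same strategy as the paper: reduce to the projected attractors in $\R^m$ via the invariant manifolds, apply Proposition \ref{Shadowing} together with \eqref{estimate_one_map} to the time-one maps, and lift back using the Lipschitz bound on $s_\ast^\varepsilon$ and the convergence estimate for $s_\ast^\varepsilon-s_\ast^0$. The only cosmetic difference is that the paper routes the lifting step through the decomposition $u^\varepsilon=T_\varepsilon w^\varepsilon$ and Theorem \ref{continuity_nonlinear_semigroup}, whereas you decompose $u^\varepsilon=v^\varepsilon+s_\ast^\varepsilon(v^\varepsilon)$ directly; you are also right that the proof of \eqref{estimate_invariant_manifold} actually yields the logarithmic factor, which is harmless for the final bound.
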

\begin{proof} We will follow \cite{Santamaria2017}. For each $\varepsilon\in [0,\varepsilon_0]$ we denote $T_\varepsilon=T_\varepsilon(1)$. Given $u^\varepsilon\in \mathcal{A}_\varepsilon$, by invariance there is $w^\varepsilon\in \mathcal{A}_\varepsilon$ such that $u^\varepsilon=T_\varepsilon w^\varepsilon$ so we can write 
$
w^\varepsilon=Q_\varepsilon w^\varepsilon + s_\ast^\varepsilon(Q_\varepsilon w^\varepsilon),
$
where $Q_\varepsilon w^\varepsilon \in \bar{\mathcal{A}_\varepsilon}$ with $\bar{\mathcal{A}_\varepsilon}=Q_\varepsilon\mathcal{A}_\varepsilon$ the projected attractor in $\R^m$. Thus
\begin{align*}
\|u^\varepsilon-u^0\|_{X_\varepsilon^\frac{1}{2}} &=\|T_\varepsilon w^\varepsilon-T_0 w^0\|_{X_\varepsilon^\frac{1}{2}}\leq \|T_\varepsilon w^\varepsilon-T_\varepsilon w^0\|_{X_\varepsilon^\frac{1}{2}}+\|T_\varepsilon w^0-T_0 w^0\|_{X_\varepsilon^\frac{1}{2}}\\
& \leq C(\|p_\varepsilon-p_0\|_{L^\infty(\Omega_1)}+\varepsilon)^\frac{1}{2}|\log(\|p_\varepsilon-p_0\|_{L^\infty(\Omega_1)}+\varepsilon)| +C\|w^\varepsilon-w^0\|. 
\end{align*}
But 
\begin{align*}
\|w^\varepsilon-w^0\|&=\|Q_\varepsilon w^\varepsilon-Q_0 w^0\|_{\R^m}+\|s_\ast^\varepsilon(Q_\varepsilon w^\varepsilon)-s_\ast^0(Q_0 w^0)\|_{X_\varepsilon^\frac{1}{2}}\\
&\leq \|Q_\varepsilon w^\varepsilon-Q_0 w^0\|_{\R^m}+\|s_\ast^\varepsilon(Q_\varepsilon w^\varepsilon)-s_\ast^\varepsilon(Q_0 w^0)\|_{X_\varepsilon^\frac{1}{2}}+\|s_\ast^\varepsilon(Q_0 w^0)-s_\ast^0(Q_0 w^0)\|_{X_\varepsilon^\frac{1}{2}}\\
&\leq C \|Q_\varepsilon w^\varepsilon-Q_0 w^0\|_{\R^m}+C(\|p_\varepsilon-p_0\|_{L^\infty(\Omega_1)}+\varepsilon)^\frac{1}{2}|\log(\|p_\varepsilon-p_0\|_{L^\infty(\Omega_1)}+\varepsilon)|, 
\end{align*}
which implies 
$$
\tn{d}_H(\mathcal{A}_\varepsilon,\mathcal{A}_0)\leq \tn{d}_H(\bar{\mathcal{A}_\varepsilon},\bar{\mathcal{A}_0})+C(\|p_\varepsilon-p_0\|_{L^\infty(\Omega_1)}+\varepsilon)^\frac{1}{2}|\log(\|p_\varepsilon-p_0\|_{L^\infty(\Omega_1)}+\varepsilon)|.
$$
The result follows by \eqref{estimate_one_map} and Theorem \ref{Shadowing}. 
\end{proof}

\bibliographystyle{abbrv}
\bibliography{References}


\end{document}